\documentclass{amsart}
\usepackage[english]{babel}
\usepackage{amsfonts, amsmath, amsthm, amssymb,amscd,indentfirst}

\numberwithin{equation}{section}

\textwidth = 450pt 
\hoffset = -30pt

\usepackage{tabls}

\usepackage{marginnote}

\usepackage{mathrsfs}
\usepackage{euscript}

\usepackage{xcolor}
\usepackage{amsmath,amssymb,latexsym,indentfirst}
\usepackage{times}
\usepackage{palatino}


\usepackage{stmaryrd}

\usepackage{wasysym}

\usepackage{graphicx}
\graphicspath{ {./images/}}

\theoremstyle{plain}
\newtheorem{theorem}{Theorem}[section]

\newtheorem{corollary}[theorem]{Corollary}
\theoremstyle{definition}
\newtheorem{definition}[theorem]{Definition}
\theoremstyle{question}
\newtheorem{question}[theorem]{Question}

\theoremstyle{remark}
\newtheorem{remark}[theorem]{Remark}

\newcommand{\interior}[1]{%
	{\kern0pt#1}^{\mathrm{o}}%
}

\begin{document}
	
	\title[Conserved quantities in General Relativity]{Conserved quantities in General Relativity: the case of initial data sets with a noncompact boundary}
	\author{Levi Lopes de Lima}
	\address{Universidade Federal do Cear\'a (UFC),
		Departamento de Matem\'{a}tica, Campus do Pici, Av. Humberto Monte, s/n, Bloco 914, 60455-760,
		Fortaleza, CE, Brazil.}
	\email{levi@mat.ufc.br}
	\thanks{This essay is based on a talk delivered at IST/Lisboa, in February/2020. The author would like to thank J. Nat\'ario for the invitation and the CAMGSD (Centro de An\'alise Matem\'atica, Geometria e Sistemas Din\^amicos) for the financial support. Also, the author has benefited from support coming from CNPq/Brazil grant 312485/2018-2 and  FUNCAP/CNPq/PRONEX grant 00068.01.00/15.}
	
	\begin{abstract}
		It is well-known that considerations of symmetry lead to the definition of a host of conserved quantities (energy, linear momentum, center of mass, etc.) for an asymptotically flat initial data set, and a great deal of progress in Mathematical Relativity in recent decades essentially amounts to establishing fundamental properties for such quantities (positive mass theorems, Penrose inequalities, geometric representation of the center of mass by means of isoperimetric foliations at infinity, etc.) under suitable energy conditions. 
	In this article I first review certain aspects of this classical theory and then describe how they can be (partially) extended to the setting in which the initial data set carries a non-compact boundary. In this case, lower bounds for the scalar curvature in the interior and for the mean curvature along the boundary both play a key role. Our presentation aims to highlight various rigidity/flexibility phenomena coming from the validity, or lack thereof, of the corresponding positive mass theorems and/or Penrose inequalities.
	\end{abstract}

	\maketitle
	\tableofcontents
	
	\section{Introduction}\label{Intro}
	
	The ADM formalism in General Relativity has provided a systematic way to attach to certain solutions of Einstein field equations an array of conserved quantities, thus resolving, at least in the special cases where the strategy succeeds, an old controversy related to the notorious difficulties in making sense of such invariants.
	The underlying idea consists in requiring that in the asymptotic region the corresponding initial data set decays in a suitable sense towards some reference solution that, besides enjoying other nice properties which are irrelevant for the present discussion, is assumed to carry a {\em nontrivial} isometry group. After transplanting the corresponding Killing vector fields to the given initial data set by means of the chosen identification at infinity, one is able to apply our favorite rendition of Noether's principle relating symmetries to conservation laws in order to exhibit the desired quantities, which in general are expressed in terms of certain flux integrals over the boundary at infinity.  
	It then follows that these quantities are  conserved under time evolution of the system whenever the matter fields decay fast enough (in particular, for vacuum solutions).
	In addition to playing a paramount role in the understanding of the dynamics of solutions, the study of theses quantities (notably the energy and the center of mass) reveals deep connections with  Geometric Analysis (positive mass theorems and their applications to the Yamabe problem, Penrose inequalities and the inverse mean curvature flow, geometric representation of the center of mass by means of isoperimetric foliations at infinity, etc.). In this setting, the scalar curvature emerges as the fundamental concept linking together these quite disparate realms, as it not only may be viewed as the energy density along initial data sets but also appears prominently in the zero order term of the relevant differential operators (the conformal Laplacian, the Jacobi operator on minimal surfaces, the Dirac Laplacian acting on spinors, etc.). The interactions between General Relativity and  Geometric Analysis arising from these ideas are countless and the aim of this survey is to convey certain aspects of this narrative from a rather personal perspective, with an emphasis towards positive mass theorems and the various rigidity/flexibility phenomena stemming from them. 
	
	We start our journey in Section \ref{class} by recalling  how the classical attempts to make sense of the total energy of a gravitational system in General Relativity were incorporated into the  ADM approach to Canonical Gravity \cite{arnowitt1962gravitation} so as to allow  a precise formulation of the so-called Positive Mass Theorem (PMT) for initial data sets meeting suitable energy conditions. A proof of this central result was eventually obtained by Schoen and Yau  using minimal surfaces \cite{schoen1981energy} and soon afterwards Witten presented an alternate proof based on spinors \cite{witten1981new}. Although the spin assumption is quite restrictive in higher dimensions, Witten's method revealed itself quite flexible, being particularly useful when trying 
	to extend this circle of ideas to initial data sets carrying a noncompact boundary \cite{almaraz2014positive,almaraz2020mass, almaraz2021spacetime}. A distinctive feature of this latter set of results, which is reviewed in Section \ref{cons:qt:bd}, is that besides a lower bound on the scalar curvature in the interior we also require, quite naturally, a lower bound on the mean curvature along the boundary\footnote{In General Relativity, this intimate relationship between scalar and mean curvatures makes its debut in the  Gibbons-Hawking-York action, which plays a central role in the so-called path integral approach to Quantum Gravity \cite{york1972role,gibbons1977action}; see (\ref{gibb:haw}) below. For an in-depth account of the geometric side of this story, see \cite{gromov2019four}.}. Similarly to what happens in the boundaryless case, the (time-symmetric) PMT presented in Theorem \ref{main:abl} also finds notable applications in Geometric Analysis, especially in regard to the Yamabe problem on manifolds with boundary \cite{almaraz2010existence,almaraz2015convergence,almaraz2019compactness}, besides inspiring further developments \cite{koerber2019riemannian}. 
	In Section \ref{non:zero:lamb}, certain rigidity/flexibility phenomena are summarized in case the initial data set yields a solution with  {\em nonzero} cosmological constant. In the asymptotically hyperbolic case we present a rigidity result for conformally compact Einstein manifolds carrying a {\em minimal} inner boundary obtained in \cite{almaraz2020mass}, which may be of some interest in connection with the so-called AdS/BCFT correspondence. Also, we discuss a flexibility result for the de Sitter-Schwarzschild metric \cite{cruz2018deforming} in the line of the famous (negative) solution of Min-Oo's conjecture by Brendle-Marques-Neves \cite{brendle2011deformations}. 
	In Section \ref{center:sec}, we complete our survey by explaining how the well-known  connections between the center of mass and isoperimetry in the boundaryless 
	setting  ensuing from the seminal article by Huisken and Yau \cite{huisken1996definition}  also admit suitable extensions in the presence of a boundary \cite{almaraz2020center}.  Finally, with a view towards the future we intersperse along the text a few interesting problems in this area of research.

	\vspace{0.3cm}
	\noindent
	{\bf Acknowledgments.} I heartily thank
	S. Almaraz, C.T. Cruz, F. Gir\~ao, L. Mari and J. Nat\'ario for reading a preliminary version of this article and contributing with valuable suggestions. 
	
	\section{The classical legacy}\label{class}
	
	The scalar curvature has played a significant role in General Relativity (GR) since its inception by A. Einstein in 1915. Its use in the  Lagrangian formulation of the theory  was justified by H. Vermeil, a student of F. Klein at G\"ottingen, who checked the allegedly folklore result that the scalar curvature $R_{\overline g}$ is the unique Riemannian/Lorentzian scalar invariant which depends on derivatives of the metric up to second order, being linear in the second derivatives. This naturally led to the selection of the so-called {\em Hilbert-Einstein action}
	\begin{equation}\label{action:eh}
		(\overline g,\psi)\mapsto \int_{\overline M} \left(R_{\overline g}+\eta\mathscr T_{\overline g,\psi}\right)d{\overline M},
	\end{equation}   
	where $\overline g$ is a Lorentzian metric on a given $4$-manifold\footnote{Even though most of the results in Sections \ref{class}, \ref{cons:qt:bd} and \ref{center:sec} hold true in any spacetime dimension $n+1\geq 4$, we momentarily work in the physical dimension $n=3$ in order to simplify the exposition. Also, we assume that all Lorentzian manifolds are time-oriented. Moreover, 
		when representing tensor quantities in local coordinates, we will make use of the index ranges  $0\leq\alpha,\beta,\ldots \leq 3$ and $1\leq, i,j,\ldots\leq 3$, with the convention that the label $0$ is reserved for a chosen time coordinate.} 
	$\overline M$ and $\mathscr T$ is the matter-energy Lagrangian density, which is assumed to depend on the metric and on some external matter field $\psi$\footnote{For our purposes, it suffices to assume that $\psi$ varies among the sections of a natural vector bundle over $\overline M$, so the action (\ref{action:eh}) is generally covariant, i.e. preserved by the action of the diffeomorphism group of $\overline M$. This choice already includes many interesting examples.}. By extremizing this action with respect to the metric and discarding boundary terms we obtain {\em Einstein field equations}
	\begin{equation}\label{field:eq}
		-G_{\overline  g}+8\pi T_{\overline g,\psi}=0,
	\end{equation}
	whose solutions $(\overline g,\psi)$ encode the dynamical features of the theory. Here, \[
	G_{\overline g}={\rm Ric}_{\overline g}-\frac{R_{\overline g}}{2}\overline g
	\] is the Einstein tensor of $\overline g$, $T_{\overline g,\psi}$ is the stress-energy-momentum tensor and we have adjusted the coupling constant $\eta$ conveniently.
	We should also add to (\ref{field:eq}) the system of equations $U_{\overline g,\psi}=0$ obtained by extremizing (\ref{action:eh}) with respect to $\psi$, but this should not concern us here.

	The problem remains of 
	extracting physical information out of the highly nonlinear system (\ref{field:eq}).
	This already afflicted the founding fathers of GR, especially in regard to the status of energy conservation in the theory. In a by now well documented story, both Hilbert and Klein commissioned the algebraist E. Noether to clarify the role played by certain differential identities satisfied by the variational derivative of the action \cite{kosmann2012noether}. In a stroke of genius,  
	Noether derived these identities in the general framework of her 
	celebrated Second Theorem, which applies, for instance, to any Lagrangian theory whose symmetry group depends locally on  finitely many functions of the spacetime variables. 
	In the specific setting of GR as formulated above, these identities read as  
	\begin{equation}\label{cons:noe}
		{\rm div}_{\overline g}\left(-G_{\overline  g}+8\pi T_{\overline g,\psi}\right)-\frac{1}{2}\mathcal L\psi^*U_{\overline g,\psi}=0,
	\end{equation}  
	where $\mathcal L\psi^*$ is the formal adjoint of the Lie derivative $\mathcal L\psi$ acting on vector fields \cite{barbashov1983continuous}.  
	We stress that this holds true for {\em any} pair $(\overline g,\psi)$, irrespective of it being a solution. Notice also that in vacuum (absence of matter fields) this reduces to the contracted Bianchi  identities for an arbitrary metric $\overline g$.  
	As explained elsewhere, the aftereffect of this discussion is that even though GR admits a huge symmetry group encompassing the diffeom\-orphisms of $\overline M$, none of these yields a {\em nontrivial} conservation law on a given solution  by the standard procedure \cite{olver2000applications}.
	In this way, full covariance, which is a treasured feature of the theory and happens to be the  mechanism behind the validity of (\ref{cons:noe}), actually entails the rather disappointing conclusion that conservation laws in GR are distinct in nature from those appearing for instance in Classical Mechanics or Special Relativity, which lie both in the confines of Noether's First Theorem and hence are amenable to the more conventional treatment \footnote{The notable exception here takes place in the rather special cases where the underlying space-time carries Killing vector fields; this covers the stationary case, for instance \cite{beig1978arnowitt}.}.

	Despite this rather discouraging outcome, the formalism above already contains the clue towards an alternate construction of {\em nontrivial} conserved quantities in GR, at least for a restricted class of solutions. The key observation is that the differential identities in (\ref{cons:noe}) suggest that only six out of the ten equations in (\ref{field:eq}) actually carry dynamical content, whereas the remaining four equations correspond to constraints relating the initial
	values of the fields instead of determining how these fields evolve. The standard way to confirm this intuition is to consider a spacelike slice $(M,g,h)\hookrightarrow (\overline M,\overline g)$, which means that $M$ is an embedded spacelike hypersurface, $g=\overline g|_M$ is the induced Riemannian metric and $h$ is the associated second fundamental form (with respect to the future directed, timelike unit normal vector field to $M$). If we set  
	\[
	\mu(g,h)=\frac{1}{16\pi}\left(R_g-|h|_g^2+({\rm tr}_gh)^2\right), \quad J(g,h)=\frac{1}{8\pi}\left({\rm div}_gh-d{\rm tr}_gh\right),
	\]
	then the Gauss and Codazzi equations of hypersurface theory applied to (\ref{field:eq}) yield the {\em constraint equations}
	\begin{equation}\label{const:eq}
		\mu=T_{00}, \quad J_i=T_{0i}.
	\end{equation}    
	Interestingly enough, we see here the scalar curvature $R_g$ of $g$ resurfacing as a multiple of the energy density as measured by an observer comoving with the slice in the time-symmetric case ($h=0$). 
	
	A major breakthrough in Mathematical Relativity took place when Mme. Y. Choquet-Bruhat reversed this line of thought and proved that, at least in vacuum, (\ref{const:eq}) is a sufficient condition for the existence of solutions. More precisely, she showed that if one is given a triple $(M,g,h)$, where $(M,g)$ is a Riemannian $3$-manifold and $h$ is a twice covariant symmetric tensor on $M$ satisfying (\ref{const:eq}) with $T=0$ then there exists a Lorentzian $4$-manifold $(\overline M,\overline g)$ satisfying $G_{\overline g}=0$ and a spacelike isometric embedding $(M,g)\hookrightarrow (\overline M,\overline g)$ such that $h$ is the induced second fundamental form. Roughly, this is proved by propagating in time the given {initial data} $(M,g,h)$ by means of the evolution system corresponding to the $(i\leq j)$-components of the field equations and then checking that the constraints (in this case, $\mu=0$ and $J=0$) are preserved; see \cite{ringstrom2015origins,choquet2015beginnings} for recent accounts of her work, including extensions by herself and others in the presence of matter fields. 
	
	Choquet-Bruhat's theorem and its variants provided an initial value formulation for GR which shifted the focus from a solution $(\overline M,\overline g)$ to an initial data set $(M,g,h,\mu,J)$, thus launching a whole new perspective on the subject. The following definition, which is an outgrowth of the so-called ADM approach to Canonical Gravity \cite{arnowitt1962gravitation}, illustrates this viewpoint in the context  of the search for conserved quantities for a class of initial data sets modeling isolated gravitational systems. 
	
	\begin{definition}\label{adm:ids}
		An initial data set (IDS) $(M,g,h,\mu,J)$ is asymptotically flat with decay rate $\tau\in (1/2,1]$ if there exists an exterior region $M_{\rm ext}$, with $M\backslash M_{\rm ext}$ compact, and a diffeomorphism $\mathbb R^3\backslash \overline{B_1(0)}\cong  M_{\rm ext}$ such that, in the corresponding asymptotic coordinate $x$,  there hold 
		\[
		g_{ij}(x)=\delta_{ij}+O_3(r^{-\tau}), \quad h_{ij}(x)=O_1(r^{-\tau-1}),
		\]
		and 
		\[
		\mu(x)=O(r^{-2\tau-2}), \quad J(x)=O(r^{-2\tau-2}),
		\]
		as $r=|x|_\delta\to +\infty$.
	\end{definition}

	Intuitively, this means that as one approaches spatial infinity, $(M,g,h,\mu,J)$ converges in a suitable  sense to $(\mathbb R^3,\delta,0,0,0)$, an IDS of Minkowski vacuum space $(\mathbb L^{1,3},\overline \delta, 0)$ of Special  Relativity. Now, since this maximally symmetric vacuum solution carries 
	a $10$-dimensional space of isometries (the Poincar\'e group), we are tempted to use the restrictions of  the corresponding Killing fields to the slice $t=x_0=0$ in order to construct conserved quantities in the asymptotic limit via the appropriate version of Noether's First Theorem \cite{christodoulou2008mathematical,harlow2020covariant}.  This works fine indeed and Table \ref{table:1} provides the final outcome.

\vspace{0.3cm}
	
	\begin{table}	
		\setlength{\tablinesep}{0.5em}{
		\begin{tabular}{c| c| c}
				Killing vector field & Conserved quantity  & Surface integral  \\
				\hline
				$\partial_0$ (time translation) & energy $E$ & $16\pi E=\int_{S_\infty^2}\mathbb U_{\bf 1}(\nu) dS^2_\infty$ \\
				$\partial_i$ (spatial translation) & 	linear momentum $P$ & $8\pi P_i=\int_{S_\infty^2}({\bf i}_{\partial_i}\Pi)(\nu) dS^2_\infty$  \\
				$x_i\partial_0+x_0\partial_i$ (boost) &  center of mass $C$ & $16\pi E C_i=\int_{S_\infty^2}\mathbb U_{x_i}(\nu) dS^2_\infty$  \\
				$X_{ij}=x_i\partial_j-x_j\partial_i$ (spatial rotation) &  angular momentum $Q$ & $8\pi Q_{ij}=\int_{S_\infty^2}({\bf i}_{X_{ij}} \Pi)(\nu) dS^2_\infty$  \\ 
				\hline
			\end{tabular}}
		\vspace{0.2cm}
		\caption{Conserved quantities for asymptotically flat IDS's}
			\label{table:1}
		\end{table}

\vspace{0.3cm}

	Regarding this table, a few comments are in order:
	\begin{itemize}
		\item We set
		\[
		\mathbb U_w=w({\rm div}_\delta e-d{\rm tr}_\delta e)-{\bf i}_{\nabla_\delta w} e+({\rm tr}_\delta e)dw,
		\] 
		and 
		\[
		\Pi=h-({\rm tr}_gh)g,
		\]
		where $e=g-\delta$, $w:\mathbb R^3\to\mathbb R$ is a function, ${\bf 1}$ represents the function identically equal to $1$ and the infinity symbol means that we first compute the flux integrals over a large coordinate sphere $S_r^2$ and then pass the limit as $r\to+\infty$. In particular, $\nu$ is the outward pointing unit normal vector field to this sphere. 
		\item $E$ and $ C$ only depend on the {\em intrinsic} geometry of the initial data set. Notice also that $ C$ only makes sense if $E\neq 0$.
		\item $(E,P)$ is the energy-momentum vector of the corresponding gravitational system. It is well defined under the ADM decay conditions in Definition \ref{adm:ids}.
		\item $C$ and $Q$ are well defined only if we further assume the so-called {\em Regge-Teitelboim} conditions:
		\[
		g^{\rm odd}(x)=O(r^{-\tau-1}), \quad h^{\rm even}(x)=O(r^{-\tau-2}), 
		\]
		and 
		\[
		\mu^{\rm odd}(x)=O(r^{-2\tau-3}), \quad 	J^{\rm odd}(x)=O(r^{-2\tau-3}).
		\]
		Here, $	f^{\rm odd}(x)=(f(x)-f(-x))/2$ and $f^{\rm even}=f-f^{\rm odd}$ for any function in the asymptotic region.  
		\item These quantities are conserved under time evolution if the matter fields decay fast enough at infinity, except for the center of mass that satisfies $dC/dt=P$, as expected \cite{christodoulou2008mathematical}; this also follows from the quasi-local approach in \cite{wang2016energy}. 
	\end{itemize}
	
	A great deal of progress in Mathematical Relativity in recent decades amounts to establishing fundamental properties of these quantities. The prominent result in this area is the Positive Mass Theorem (PMT) due to R. Schoen and S.-T. Yau. To state it, first we need  single out those initial data sets which are viable from a physical viewpoint. 
	
	\begin{definition}\label{dom:ener}
		An initial data set $(M,g,h,\mu,J)$ satisfies the dominant energy condition (DEC) if there holds 
		\begin{equation}\label{dom:energ:eq}
			\mu\geq |J|_g
		\end{equation}
		everywhere along $M$. 
	\end{definition}  
	
	Note that, by (\ref{const:eq}), this means that the $4$-vector $T^{0\alpha}$ is causal and future directed. 
	In words: a comoving observer never sees the energy-momentum density $(\mu,J)$ flowing faster than light!

	\begin{theorem}\cite{schoen1981energy}\label{pmt}
		If  $(M,g,h,\mu,J)$ is an asymptotically flat IDS satisfying the DEC (\ref{dom:energ:eq}) then $E\geq |P|_\delta$, with the equality holding if and only if $E=|P|_\delta=0$ and $(M,g,h)$  can be isometrically embedded in Minkowski space.   
	\end{theorem}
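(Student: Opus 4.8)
\section*{Proof proposal}

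The plan is to follow Witten's spinorial method, which in the physical dimension $n=3$ is available without extra hypotheses since every orientable $3$-manifold is spin. Restricting the spacetime spinor bundle to the slice $M$ one obtains a spinor bundle over $M$ carrying a Hermitian inner product, Clifford multiplication, and the ambient spin connection, from which one builds the \emph{Dirac--Witten operator}
$$\mathcal{D}\psi=\sum_{i=1}^{3}e_i\,\widehat\nabla_{e_i}\psi,\qquad \widehat\nabla_X\psi=\nabla_X\psi+\tfrac{1}{2}\sum_i h(X,e_i)\,e_0 e_i\,\psi,$$
where $\{e_0,e_1,e_2,e_3\}$ is an adapted orthonormal frame with $e_0$ the future timelike unit normal, $\nabla$ is the spin connection of $(M,g)$, and juxtaposition denotes Clifford product. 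The modified connection $\widehat\nabla$ encodes the second fundamental form $h$, and the whole point of its design is that the zeroth order term produced upon commuting derivatives will be governed precisely by the constraint quantities $(\mu,J)$ of (\ref{const:eq}).

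The first step is to produce a distinguished spinor: fixing a constant spinor $\psi_0$ in the asymptotic chart of Definition \ref{adm:ids}, one solves the Witten equation $\mathcal{D}\psi=0$ with $\psi-\psi_0\to 0$ at the appropriate rate. Solvability on weighted Sobolev spaces follows from asymptotic flatness once one knows $\mathcal{D}$ has trivial $L^2$-kernel, which is itself a consequence of the integral identity used below. The heart of the matter is the Weitzenb\"ock--Lichnerowicz formula adapted to $\widehat\nabla$, which after integrating over $M$, integrating by parts, and using $\mathcal{D}\psi=0$ reads schematically
$$\int_M\Big(|\widehat\nabla\psi|^2+\tfrac{1}{2}\big\langle\big(\mu\,\mathrm{Id}+\textstyle\sum_i J_i\,e_0 e_i\big)\psi,\psi\big\rangle\Big)\,dM=\text{(boundary flux over }S^2_\infty).$$
Two observations then close the argument. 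On the one hand the endomorphism $\sum_i J_i\,e_0 e_i$ is Hermitian with eigenvalues $\pm|J|_g$, so the DEC (\ref{dom:energ:eq}) forces the bulk integrand to be pointwise nonnegative. On the other hand a direct computation of the boundary flux in the asymptotic chart identifies the right-hand side, up to a fixed positive multiple, with the Hermitian form $\big\langle\big(E\,\mathrm{Id}+\sum_i P_i\,e_0 e_i\big)\psi_0,\psi_0\big\rangle$, whose eigenvalues are exactly $E\pm|P|_\delta$.

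Combining these, the endomorphism $E\,\mathrm{Id}+\sum_i P_i\,e_0 e_i$ is positive semidefinite for every choice of $\psi_0$, which is precisely the inequality $E\geq|P|_\delta$. I expect the real work to lie in the rigidity statement. Equality $E=|P|_\delta$ forces the bulk integral to vanish for a suitable $\psi_0$, hence $\widehat\nabla\psi\equiv 0$, so that $M$ carries a nontrivial $\widehat\nabla$-parallel spinor. The main obstacle is to extract genuine geometric rigidity from such spinors: one must first upgrade $E=|P|_\delta$ to $E=|P|_\delta=0$, and then show that a full complement of parallel spinors integrates to an isometric embedding of $(M,g,h)$ into Minkowski space with $h$ as induced second fundamental form. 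This integrability analysis---controlling the curvature of $\widehat\nabla$ via the constraints and ruling out nontrivial null data with $E=|P|_\delta\neq 0$---is considerably subtler than the inequality itself, and it is here that the asymptotic decay conditions and the detailed structure of (\ref{const:eq}) must be used in full.
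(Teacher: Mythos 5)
The paper itself offers no proof of Theorem \ref{pmt}: it cites the original minimal-surface proof of Schoen--Yau \cite{schoen1981energy}, mentions Witten's spinor proof \cite{witten1981new}, and notes that the equality case in arbitrary dimensions was only settled in \cite{huang2019equality}; the only argument it actually sketches is for the boundary analogue, Theorem \ref{main}. Your proposal is a correct outline of the Witten route: the Dirac--Witten operator with the $h$-modified connection, existence of an asymptotically constant harmonic spinor on weighted spaces, the Weitzenb\"ock identity whose zeroth-order term $\tfrac12\bigl(\mu\,\mathrm{Id}+\sum_i J_i e_0e_i\bigr)$ has eigenvalues $\mu\pm|J|_g$ so that the DEC gives pointwise nonnegativity, and the identification of the flux at infinity with the Hermitian form of $E\,\mathrm{Id}+\sum_i P_ie_0e_i$ on $\psi_0$, whose eigenvalues are $E\pm|P|_\delta$. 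This matches, term for term, the formula the paper displays in its sketch of Theorem \ref{main}, minus the extra boundary integral over $\Sigma$, and in dimension $3$ the spin hypothesis is automatic, so the inequality $E\ge|P|_\delta$ is genuinely established by your argument. The route the citation actually points to is different in kind: Schoen--Yau reduce to the time-symmetric case via the Jang equation and then exploit the stability of minimal surfaces; that approach avoids spinors and works without a spin assumption in higher dimensions, whereas the spinor method is the one that transplants cleanly to the boundary setting of Section \ref{cons:qt:bd}. On rigidity your proposal is a statement of intent rather than a proof: upgrading $E=|P|_\delta$ to $E=|P|_\delta=0$ and integrating a full complement of $\widehat\nabla$-parallel spinors into an isometric embedding into Minkowski space is precisely the part the paper flags as having been completed only recently in \cite{huang2019equality}, so you are right to isolate it as the hard step, but as written the equality case remains unproved.
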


	Thus, under (\ref{dom:energ:eq}) we see that $(E,P)$ is causal and future directed as a $4$-vector in Minkowski space. In words: viewed from infinity the gravitational system modeled by the IDS moves as a massive particle in Special Relativity. 
	This remarkable result was first proved using minimal surfaces techniques \cite{schoen1981energy} and soon after by using spinors \cite{witten1981new}. We note that the rigidity statement in arbitrary dimensions has been settled in full generality only recently \cite{huang2019equality}; we refer to this latter article for details on the history of this subject.  
	
	From our perspective the next corollary is worth mentioning.
	
	\begin{corollary}\label{cor:pmt}
		Let $(M,g)$ be a time-symmetric, asymptotically flat  IDS (i.e. an asymptotically flat Riemannian manifold) satisfying $R_g\geq 0$ everywhere. Then the ADM mass $m_{ADM}:=E$ is nonnegative and vanishes only if $(M,g)=(\mathbb R^3,\delta)$ isometrically. 
	\end{corollary}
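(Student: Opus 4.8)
The plan is to obtain this statement as a direct specialization of Theorem \ref{pmt} to the time-symmetric case $h=0$. First I would observe that when $h=0$ the defining formulas for the constraint quantities collapse: since every term of $\mu$ and $J$ involving $h$ drops out, we get
\[
\mu = \frac{1}{16\pi}R_g, \qquad J = 0,
\]
so that the dominant energy condition $\mu \geq |J|_g$ becomes \emph{equivalent} to the hypothesis $R_g \geq 0$. Thus $(M,g,0,\tfrac{1}{16\pi}R_g,0)$ is an asymptotically flat IDS satisfying the DEC, and Theorem \ref{pmt} is available.

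Next I would compute the linear momentum in this setting. Since $\Pi = h - ({\rm tr}_g h)g = 0$ when $h=0$, the surface integral defining $P$ vanishes identically, whence $P=0$ and $|P|_\delta = 0$. The inequality $E \geq |P|_\delta$ furnished by Theorem \ref{pmt} then reduces to $E \geq 0$, which is precisely the asserted nonnegativity of $m_{ADM}=E$.

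For the rigidity clause, suppose $m_{ADM}=E = 0$. Because $P=0$ in the time-symmetric case, we have $E = |P|_\delta = 0$, so the equality statement of Theorem \ref{pmt} provides an isometric embedding of $(M,g,h)=(M,g,0)$ into Minkowski space $\mathbb L^{1,3}$ realizing $(M,g)$ as a spacelike hypersurface whose induced second fundamental form vanishes. The remaining task is to upgrade this to the conclusion $(M,g)\cong(\mathbb R^3,\delta)$. I would argue that a spacelike hypersurface with vanishing second fundamental form is totally geodesic, and that in the flat ambient space $\mathbb L^{1,3}$ a \emph{complete} totally geodesic spacelike hypersurface must be an affine hyperplane, which carries the flat induced Euclidean metric. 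Using the completeness of $(M,g)$ (geodesics map to full straight lines) together with its asymptotic flatness, the image fills out an entire spacelike hyperplane, yielding the desired global isometry with $(\mathbb R^3,\delta)$.

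The main obstacle is not the inequality, which is a bookkeeping specialization of Theorem \ref{pmt}, but the rigidity step, where one must pass from the abstract isometric embedding granted by the theorem to a concrete identification of $(M,g)$ with flat Euclidean space. The delicate point is ensuring that the totally geodesic image is complete and sweeps out the whole hyperplane rather than a proper piece of one; this is exactly where completeness and asymptotic flatness of $(M,g)$ are genuinely used.
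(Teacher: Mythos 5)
Your proposal is correct and follows exactly the route the paper intends: Corollary \ref{cor:pmt} is stated as an immediate specialization of Theorem \ref{pmt} to $h=0$, where $\mu=\frac{1}{16\pi}R_g$, $J=0$ and $P=0$, so the DEC reduces to $R_g\geq 0$ and the inequality to $E\geq 0$. Your rigidity step (a complete totally geodesic spacelike hypersurface in $\mathbb L^{1,3}$ is an affine hyperplane with induced flat metric) is the standard way to conclude $(M,g)=(\mathbb R^3,\delta)$ from the embedding statement, and fills in details the paper leaves implicit.
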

	
	This result, which actually holds true in any dimension $n\geq 3$ \cite{schoen2017positive,lohkamp2016higher}, has notable applications to many topics in Geometric Analysis, including the Yamable problem and its developments \cite{schoen1984conformal,leeparker1987,brendle2015recent}. But perhaps as important as the direct applications themselves is the injection of new methods in a honorable area of research. For instance, the tools employed to establish a rather special case of Corollary \ref{cor:pmt}, namely, the so-called {\em Geroch conjecture}\footnote{This says that a metric with nonnegative scalar curvature on a torus is necessarily flat.}, when combined with the celebrated  ``controlled surgery'' technique, eventually led to a huge collection of remarkable accomplishments, including the classification of closed, simply connected manifolds of dimension $n\geq 5$ carrying a metric with positive scalar curvature \cite{schoen1979structure,gromov1980classification,gromov1983positive,stolz1992simply}.
	
	\section{Conserved quantities in the presence of a noncompact boundary}\label{cons:qt:bd}  
	
	A version of the Yamabe problem on compact manifolds with boundary asks for a conformal metric which is scalar flat in the interior and has constant mean curvature along the boundary \cite{escobar1992conformal}. The strategy of  attack here mimics the boundaryless situation: in the simpler case, the test function to be inserted in the relevant Yamabe quotient is devised by local methods, whereas in the harder case the construction of the test function is global in nature and in principle requires a version of the PMT in the presence of a noncompact boundary \cite{almaraz2010existence}\footnote{See also \cite{almaraz2019compactness} and the references therein for applications of this PMT to the compactness of the space of solutions.}. Even though this latter case was eventually settled by alternate methods \cite{mayer2017barycenter}, the use of a PMT is definitely more conceptual and seems to be indispensable when dealing with a parabolic version of the problem \cite{almaraz2015convergence}. The appropriate version of the PMT applies to the class of Riemannian manifolds described below. 
	
	\begin{definition}\cite{almaraz2014positive}\label{asym:flat:Pi}
		A $3$-manifold $(M,g)$ is asymptotically flat with a non-compact boundary $\Sigma$ if there exists  a diffeomorhism  $\mathbb R^3_{+}\backslash \overline{B_1(0)}\cong M_{\rm ext}$ such that, in the corresponding asymptotic coordinates $x$, there hold 
		\[
		e^+:=g-\delta^+ =O_3(r^{-\tau}), \quad \tau>\frac{1}{2},
		\]
		and 
		\[
		R_g=O(r^{-3-\sigma}), \quad H_g=O(r^{-2-\sigma}), \quad \sigma>0,
		\]
		as $r=|x|_\delta\to +\infty$. Here,  $H_g$ is the mean curvature of $\Sigma$, $\mathbb R^3_+=\{x\in \mathbb R^3; x_3\geq 0\}$ and $\delta^+=\delta|_{\mathbb R^3_+}$.
	\end{definition}      
	
	For this kind of manifold we may define a notion of mass (or energy) $\mathscr E$ as indicated in the first line of Table \ref{table:2} below, 
	where $\partial S^2_{\infty,+}$ is the large scale limit of coordinate hemispheres $S^2_{r,+}$ of radius $r$ centered at the origin of the model space $\mathbb R^3_+$ and $S^1_{\infty}=\partial S^2_{\infty,+}$ is the boundary at infinity of $\Sigma$. However, as in the boundaryless case, this is just the tip of the iceberg!
	Indeed,
	we may picture a situation where the triple $(M,g,\Sigma)$ is isometrically embedded in some Lorentzian manifold $(\overline M,\overline g)$ carrying a $3$-dimensional time-like boundary $\overline \Sigma$ such that $\Sigma=M\cap\overline \Sigma$.  In order to recover this as the IDS associated to a solution of appropriate field equations we must replace (\ref{action:eh}) 
	by the {\em Gibbons-Hawking-York action}
	\begin{equation}\label{gibb:haw}
		(\overline g,\psi,\xi)\mapsto \int_{\overline M} \left(R_{\overline g}+\eta T_{\overline g,\psi}\right)d{\overline M}+\int_{\overline \Sigma}\left(2H_{\overline g}+\hat{\eta}\mathscr S_{g,\xi}\right)d{\overline \Sigma},
	\end{equation}
	where $H_{\overline g}$ is the mean curvature of the embedding $\overline \Sigma\hookrightarrow\overline M$ and $\mathscr S$ is the matter-energy Lagrangian density due to a matter field $\xi$ distributed along $\overline \Sigma$ \cite{york1972role,gibbons1977action}.
	After extremizing this with respect to $\overline g$, but this time carefully taking into account the  boundary terms, we obtain the corresponding field equations: 
	\begin{equation}\label{efe}
		\left\{
		\begin{array}{rcc}
			G_{\overline g}& = &{8\pi}T\,\,\,{\rm in}\,\,\,\overline M,
			\\
			{\overline\Pi}_{\overline g}&=& {8\pi}S\,\,\,\text{on}\,\,\,\overline \Sigma.
		\end{array}
		\right.
	\end{equation}
	Here, $\overline \Pi_{\overline g}=\overline B-H_{\overline g} \overline g|_{\overline \Sigma}$, where $\overline B$ is the second fundamental form of $\overline\Sigma$ with respect to the inward pointing unit normal vector $\varrho$, $S$ is the boundary stress-energy-momentum tensor on $\overline \Sigma$ induced by the matter distribution associated to $\xi$\footnote{{We stress that $\mathscr S$ keeps no relationship with $\mathscr T|_{\overline \Sigma}$}.} and we have set $\hat{\eta}=\eta$. As we have seen, restriction of the first system of equations in (\ref{efe}) to $M$ yields the {\em interior} constraint equations in (\ref{const:eq}). On the other hand, 
	if we further assume  that $M$ meets $\overline \Sigma$ orthogonally along $\Sigma$, then a computation shows that the
	restriction of the second system of equations in (\ref{efe}) to $\Sigma$ gives the {\em boundary constraint equations}\footnote{The reason for the terminology is that, similarly to what happens in (\ref{const:eq}), (\ref{constraint2}) seems to relate the
		initial values of fields along $\Sigma$ instead of determining how these fields evolve. It remains to investigate
		whether these boundary constraints play a role in the corresponding initial-boundary value
		Cauchy problem.}
	\begin{equation}\label{constraint2}
		\left\{
		\begin{array}{rcl}
			H_g & = & {8\pi}S_{00},\\
			({\bf i}_{\varrho}{{\Pi}})_a & = &  {8\pi}S_{0a},
		\end{array}
		\right.
	\end{equation}
	where the indexes $1\leq a,b,\ldots\leq 2$ refer to directions tangential to $\Sigma$. Needless to say, the reader's attention should be drawn to the distinctive role played by the mean curvature in the discussion above.

	Now let $\mathbb L^{1,3}_+=\{x\in\mathbb L^{1,3};x_3\geq 0\}$ be the {\em Minkowski half-space}, whose boundary $\partial \mathbb L^{1,3}_+$ is a time-like hypersurface. Notice that $\mathbb L^{1,3}_+$ carries the totally geodesic spacelike hypersurface $\mathbb R^3_+=\{x\in\mathbb L^{1,3}_+;x_0=0\}$, the {\em Euclidean half-space}, which is endowed with the standard flat metric 
	$\delta=\overline \delta|_{\mathbb R^{3}_+}$. Notice also that $\mathbb R^3_+$  carries a totally geodesic boundary $\partial \mathbb R^3_+$.  
	We now make precise the requirement that the spatial infinity of $\overline M$, as observed along the initial data set $(M,g,h,\Sigma)$, is modeled on the inclusion $\mathbb R^3_+\hookrightarrow \mathbb L^{1,3}_+$.

	\begin{definition}\label{def:as:hyp}\cite{almaraz2021spacetime}
		We say that the IDS $(M,g,h,\Sigma)$ is {{asymptotically flat}} (with a non-compact  boundary $\Sigma$) if there exists a diffeomorphism $\mathbb R^3_+\backslash \overline{B_1(0)}\cong  M_{\rm ext}$ such that, as $r\to +\infty$, 
		\[
		g_{ij}(x)=\delta^+_{ij}+O_3(r^{-\tau}), \quad h_{ij}(x)=O_1(r^{-\tau-1}),
		\]
		\[
		\mu(x)=O(r^{-2\tau-2}), \quad J(x)=O(r^{-2\tau-2}),
		\]
		and 
		\[
		H_g=O(r^{-2-\sigma}), \quad {({\bf i}_\varrho  \Pi)^\top}=O(r^{-2-\sigma}),
		\]
		where ${}^{\top}$ means orthogonal projection onto $\Sigma$. 
	\end{definition}

	We may now proceed as in the boundaryless case and use the Killing vector fields on $\mathbb L^{1,3}_+$ associated to the isometries leaving the boundary invariant\footnote{Note that this ``half-Poincar\'e algebra'' is isomorphic to the full Poincar\'e algebra of $\mathbb L^{1,2}$. Thus, from a physical viewpoint, our model at infinity corresponds to Special Relativity with one less spatial dimension.} in order to define an array of conserved quantities as portrayed in Table \ref{table:2}\footnote{We first learned about the energy $\mathscr E$ in conversations with F. Marques.}. Here, $e^+=g-\delta^+$, $\vartheta$ is the outward co-normal unit
	vector field to $S^1_{r}\hookrightarrow \Sigma$ (computed with respect to $\delta$) and $a,b=1,2$. We also note that the extra flux integrals over $S^1_{\infty}$, which reflect the presence of the boundary $\Sigma$, only appear in the expressions for the energy and the center of mass, which  correspond to those Killing vector fields which are {\em normal} to the IDS.
	
	\vspace{0.3cm}
	
	\begin{table}
		\setlength{\tablinesep}{0.5em}{
		\begin{tabular}
	{  c| c| c } 
	\hline
				Killing vector field & Conserved quantity  & Surface integral  \\
				$\partial_0$ & energy  $\mathscr E$ & $16\pi \mathscr E=\int_{S_{\infty,+}^2}\mathbb U_{\bf 1}(\nu) dS^2_{\infty,+} -\int_{S^1_{\infty}}e^+(\partial_3,\vartheta)dS^1_{\infty}$\\
				$\partial_a$ & 	linear momentum $\mathscr P$ & $8\pi \mathscr P_a=\int_{S_{\infty,+}^2}({\bf i}_{\partial_a} \Pi)(\nu) dS^2_{\infty,+}$  \\
				$x_a\partial_0+x_0\partial_a$ &  center of mass $\mathscr C$ & $16\pi \mathscr E\mathcal \mathscr \mathscr C_a=\int_{S_{\infty,+}^2}\mathbb U_{x_a}(\nu) dS^2_{\infty,+}-\int_{S^1_{\infty}}x_ae^+(\partial_e,\vartheta)dS^1_{\infty}$  \\
				$X_{ab}=x_a\partial_b-x_b\partial_a$ &  angular momentum $\mathscr Q$ & $8\pi \mathscr Q_{ab}=\int_{S_{\infty,+}^2}({\bf i}_{X_{ab}} \Pi)(\nu) dS^2_{\infty,+}$  \\
		\hline
	\end{tabular}}
\vspace{0.2cm}
\caption{Conserved quantities for asymptotically flat IDS's with a noncompact boundary}

\label{table:2}\end{table}

\vspace{0.3cm}

	We may now state the main result in \cite{almaraz2021spacetime}, which provides a PMT for the energy-momentor $3$-vector $(\mathscr E,\mathscr P)$ 
	under suitable DECs on the interior and along the boundary.
	
	\begin{theorem}\label{main}\cite{almaraz2021spacetime}
		Let $(M,g,h,\Sigma)$ be an asymptotically flat IDS with a noncompact boundary as above and assume that it satisfies the interior  DEC
		\begin{equation}\label{dom:energ:eq:0}
			\mu\geq |J|_g\quad\, {\rm in}\,\quad M,
		\end{equation}
		and the boundary 
		DEC
		\begin{equation}\label{dom:energ:bd}
			H_g\geq |({\bf i}_\varrho \Pi)^\top|_g \quad\, {\rm on}\,\quad \Sigma.
		\end{equation}
		Then  $\mathscr E\geq |\mathscr P|_\delta$,
		with the equality holding only if $\mathscr E= |\mathscr P|_\delta=0$ and  
		$(M,g)$ can be isometrically embedded in $\mathbb L^{1,3}_+$ in such a way that  $h$ is the induced second fundamental form, $\Sigma$ is totally geodesic (as a hypersurface in $M$), lies on $\partial\mathbb L^{1,3}_+$ and $M$ is orthogonal to $\partial\mathbb L^{1,3}_+$ along $\Sigma$. 
	\end{theorem}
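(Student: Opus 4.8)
The plan is to adapt Witten's spinorial argument \cite{witten1981new} to the half-space setting, the decisive new ingredient being a boundary condition along $\Sigma$ tailored to the boundary DEC (\ref{dom:energ:bd}); this is the strategy pursued in \cite{almaraz2014positive} in the time-symmetric case and in \cite{almaraz2021spacetime} in general. First I would equip $M$ with the spinor bundle $\mathbb S$ restricted from the ambient spacetime and introduce the Dirac-Witten operator $\widehat D=\sum_i e_i\cdot\widehat\nabla_{e_i}$, where $\{e_i\}$ is a local orthonormal frame of $TM$, Clifford multiplication is denoted by $\cdot$, and $\widehat\nabla$ is the restriction to $M$ of the ambient spin connection. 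Concretely, $\widehat\nabla$ differs from the intrinsic Levi-Civita spin connection $\nabla$ by a first-order term built from the second fundamental form $h$ and Clifford multiplication by the future-directed timelike normal $e_0$, so that the zero-order term in the associated Weitzenb\"ock formula is precisely the combination appearing in the constraints (\ref{const:eq}).

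The analytic core is the following boundary value problem: find a spinor $\psi$ on $M$ with $\widehat D\psi=0$ in the interior, asymptotic at infinity to a prescribed constant spinor $\psi_0$ compatible with the boundary condition, and subject on $\Sigma$ to a chirality-type (MIT bag) boundary condition $\mathsf B\psi=0$ constructed from Clifford multiplication by the inward normal $\varrho$ and by $e_0$. The operator $\mathsf B$ must be chosen so that simultaneously (i) it renders $\widehat D$ formally self-adjoint, whence solvability follows from a Lax-Milgram/Fredholm argument in weighted Sobolev spaces, the decay rates in Definition \ref{def:as:hyp} furnishing exactly the coercivity required; and (ii) the boundary integrand it produces over $\Sigma$ in the integrated Weitzenb\"ock identity is pointwise bounded below by $(H_g-|({\bf i}_\varrho\Pi)^\top|_g)|\psi|^2$, hence nonnegative by (\ref{dom:energ:bd}).

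With such a $\psi$ in hand, I would integrate the Witten-Lichnerowicz identity over $M$. After using $\widehat D\psi=0$ and rearranging, it expresses the asymptotic flux integral in terms of three manifestly nonnegative quantities: the bulk Dirichlet energy $\int_M|\widehat\nabla\psi|^2$; a bulk zero-order term $\tfrac12\int_M\langle(\mu+\sum_k J_k\,e_0\cdot e_k\cdot)\psi,\psi\rangle$, nonnegative by the interior DEC (\ref{dom:energ:eq:0}); and a term over $\Sigma$ bounded below by $\int_\Sigma(H_g-|({\bf i}_\varrho\Pi)^\top|_g)|\psi|^2$, nonnegative by step (ii). By the flux computations underlying Table \ref{table:2}, the integral at infinity equals a positive multiple of $\langle(\mathscr E+\sum_a\mathscr P_a\,e_0\cdot e_a\cdot)\psi_0,\psi_0\rangle$. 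Hence this quantity is nonnegative for every admissible $\psi_0$; since $\sum_a\mathscr P_a\,e_0\cdot e_a\cdot$ is a symmetric endomorphism with eigenvalues $\pm|\mathscr P|_\delta$, choosing $\psi_0$ in its lowest eigenspace yields $\mathscr E\ge|\mathscr P|_\delta$.

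For the rigidity statement, equality $\mathscr E=|\mathscr P|_\delta$ forces each of the three nonnegative terms to vanish, so in particular $\widehat\nabla\psi\equiv 0$ for a maximal-dimensional space of solutions, one for each asymptotically constant $\psi_0$ in the relevant eigenspace. The integrability conditions for the overdetermined system $\widehat\nabla\psi=0$ then force the ambient curvature to vanish along $M$, while the vanishing of the $\Sigma$-integrand together with $\mathsf B\psi=0$ forces $\Sigma$ to be totally geodesic in $M$ and $M$ to meet the boundary orthogonally; these are exactly the data needed to realize $(M,g,h)$ as a slice of $\mathbb L^{1,3}_+$ with $\Sigma\subset\partial\mathbb L^{1,3}_+$, and in that flat model the energy-momentum necessarily vanishes, giving $\mathscr E=|\mathscr P|_\delta=0$. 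I expect the main obstacle to be step (ii): pinning down a boundary operator $\mathsf B$ that is at once compatible with self-adjointness of $\widehat D$ and yields a $\Sigma$-integrand with the sign prescribed by (\ref{dom:energ:bd}). This is precisely where the interplay between interior scalar curvature and boundary mean curvature emphasized throughout the paper becomes decisive.
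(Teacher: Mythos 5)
Your derivation of the inequality $\mathscr E\geq|\mathscr P|_\delta$ follows essentially the same route as the paper: the Dirac--Witten operator with the MIT bag boundary condition along $\Sigma$, existence of an asymptotically constant harmonic spinor in weighted spaces, the integrated Weitzenb\"ock identity whose bulk zero-order term is controlled by the interior DEC (\ref{dom:energ:eq:0}) and whose $\Sigma$-term is controlled by the boundary DEC (\ref{dom:energ:bd}), and the choice of $\psi_0$ in the lowest eigenspace of $\sum_a\mathscr P_a\,e_0\cdot e_a\cdot$. That part is sound and matches \cite{almaraz2021spacetime} as summarized in the paper.

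The rigidity argument, however, has a genuine gap. You assert that equality yields a maximal space of $\widehat\nabla$-parallel spinors, that ``the integrability conditions for the overdetermined system $\widehat\nabla\psi=0$ then force the ambient curvature to vanish along $M$,'' and that the embedding into $\mathbb L^{1,3}_+$ with vanishing energy-momentum follows. This is precisely the step that does \emph{not} follow from the classical Witten argument: the integrability conditions of $\widehat\nabla\psi=0$ only control the Gauss--Codazzi components of the (putative) ambient curvature contracted against spinors; what one obtains directly is $\mu=0$, $J=0$ and the existence of a Killing development, not flatness of an ambient spacetime nor the isometric embedding. Even in the boundaryless case, passing from $E=|P|$ to $E=|P|=0$ and to the embedding in Minkowski space is exactly the point that remained open for decades and was settled only recently in \cite{huang2019equality}, as the paper itself emphasizes after Theorem \ref{pmt}. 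The paper's proof takes a different and more careful route here: it first proves $\mathscr E=|\mathscr P|_\delta=0$ directly, using in a crucial way an Ashtekar--Hansen-type formula for $\mathscr E$ from \cite{de2019mass}, and only then invokes a reflection (doubling) method across $\Sigma$ so as to reduce the remaining rigidity to the boundaryless theorem of \cite{huang2019equality}. You would need to either reproduce that reduction or supply an independent argument for the two missing implications (equality $\Rightarrow$ vanishing of $(\mathscr E,\mathscr P)$, and vanishing $\Rightarrow$ embedding respecting the boundary structure); as written, your final paragraph assumes both.
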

	
	\begin{proof} (sketch)
		Since $M$ is spin, we may use the DECs to ensure that there exists a spinor $\psi$ on $M$ which is harmonic with respect to the corresponding Dirac-Witten operator and which satisfies suitable boundary conditions both at infinity (it asymptotes a previously chosen ``constant'' spinor  $\phi$ in the model $\mathbb L^{1,3}_+$) and along the boundary (it satisfies the so-called MIT bag boundary condition). With these spinors at hand,  a somewhat involved computation establishes a nice extension of Witten's celebrated formula \cite{witten1981new} for the energy-momentum vector in the presence of a boundary:
		\begin{eqnarray*}
			\frac{1}{4}\left(\mathscr E|\phi|^2-\langle \phi,\mathscr P_a \partial_{x_0}\cdot \partial_{x_a}\cdot\phi\rangle \right) 
			& = & \int_{M}(|\overline\nabla\psi|^2+\langle \mathcal R\psi,\psi\rangle)d{M}\nonumber\\
			& & \quad +\int_{\Sigma}\langle \mathcal H\psi,\psi\rangle d\Sigma,  	
		\end{eqnarray*}
		where $\mathcal R$ and $\mathcal H$ are certain self-adjoint endomorphims acting on spinors. 
		Since the DECs (\ref{dom:energ:eq:0}) and (\ref{dom:energ:bd}) imply that $\mathcal R\geq 0$ and $\mathcal H\geq 0$, respectively, the mass inequality  $\mathscr E\geq |\mathscr P|_\delta$ follows after choosing $\phi$ properly. The rigidity statement is a bit more involved as we are supposed to start with the equality $\mathscr E=|\mathscr P|_\delta$ and then to prove that $\mathscr E=|\mathscr P|_\delta=0$ indeed. Once this is accomplished, we are able conclude the argument by means of a reflection
		method that allows us to apply the rigidity part of the PMT in \cite{huang2019equality} for manifolds without boundary. This last step employs in a crucial way an Ashtekar-Hansen-type formula for $\mathscr E$ \cite{de2019mass}.
	\end{proof}
	
	As it is apparent from this proof, Theorem \ref{main} holds true in any dimension $n\geq 3$ if we assume that $M$ is spin \cite{almaraz2014positive}. Note also that in the time-symmetric case ($\Pi=0$), we check that $\mathcal R=R_g/4$,  $\mathcal H=H_g/2$ and $\mathscr P=0$, so Theorem \ref{main} reduces to  a purely Riemannian assertion.
	
	\begin{theorem}\label{main:abl}\cite{almaraz2014positive}
		If $(M,g,\Sigma)$ is asymptotically flat as in Definition \ref{asym:flat:Pi} with $R_g\geq 0$ and $H_g\geq 0$ then its mass $\mathfrak m:=\mathscr E$ is nonnegative and vanishes only if $(M,g,\Sigma)=(\mathbb R^3_+,\delta^+,\partial \mathbb R^3_+)$ isometrically.
	\end{theorem}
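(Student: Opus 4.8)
The plan is to run a spinorial argument of Witten type, now specialized to the purely Riemannian, time-symmetric setting. Strictly speaking this is the case $\Pi=0$ of Theorem \ref{main}, so one could simply invoke that result; but since the Dirac--Witten operator degenerates to the ordinary Dirac operator when $h=0$, it is cleaner to give a self-contained argument. As $M$ is an orientable $3$-manifold it is automatically spin, so I would fix a spin structure, form the spinor bundle, and work with the Dirac operator $D$, for which the Lichnerowicz--Weitzenb\"ock identity $D^2=\nabla^*\nabla+\tfrac14 R_g$ holds. The heart of the matter is to produce a harmonic spinor with the correct asymptotics together with a boundary condition tailored so that the mean curvature enters with the right sign.

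First I would set up the boundary value problem: solve $D\psi=0$ on $M$ subject to (i) the asymptotic requirement that $\psi$ approaches, at spatial infinity, a fixed constant (parallel) spinor $\phi$ of the flat model $(\mathbb R^3_+,\delta^+)$, and (ii) a local elliptic boundary condition along $\Sigma$ of MIT-bag/chirality type, built from Clifford multiplication by the inward unit normal $\varrho$. The design requirement is that this boundary condition be simultaneously self-adjoint and elliptic---so the boundary problem is Fredholm---while being geometrically adapted so that the boundary term produced by integration by parts is controlled by $H_g$. Writing $\psi=\phi+\xi$ with $\xi$ in a suitable weighted Sobolev space, and using that the model Dirac operator on the half-space is invertible under the decay rates of Definition \ref{asym:flat:Pi}, existence of $\psi$ follows from standard Fredholm/elliptic estimates for such boundary problems.

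Next I would integrate the Weitzenb\"ock identity over $M_r=M\cap\{|x|\le r\}$ and apply Stokes' theorem. Since $D\psi=0$, the bulk yields $\int_M\big(|\nabla\psi|^2+\tfrac14 R_g|\psi|^2\big)\,dM\ge 0$. The outer integral over the hemisphere $S^2_{r,+}$, together with the corner contribution along the circle $S^1_r=S^2_{r,+}\cap\Sigma$, converges as $r\to\infty$ to a multiple of $\mathscr E\,|\phi|^2$, reproducing precisely the two surface integrals in the first line of Table \ref{table:2} (the extra $S^1_\infty$-integral is exactly the boundary contribution to the Witten flux). The remaining integral over $\Sigma$ is where the boundary condition pays off: using the hypersurface Dirac decomposition $D\psi=\varrho\cdot(\nabla_\varrho\psi+D^\Sigma\psi-\tfrac12 H_g\,\psi)$, the tangential (boundary Dirac) part is annihilated by the boundary condition, leaving exactly $\tfrac12\int_\Sigma H_g|\psi|^2\,d\Sigma$. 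Collecting everything gives
\[
\tfrac14\,\mathscr E\,|\phi|^2=\int_M\Big(|\nabla\psi|^2+\tfrac14 R_g|\psi|^2\Big)\,dM+\tfrac12\int_\Sigma H_g\,|\psi|^2\,d\Sigma ,
\]
and since $R_g\ge 0$ and $H_g\ge 0$ the right-hand side is nonnegative, whence $\mathfrak m=\mathscr E\ge 0$.

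For rigidity, suppose $\mathfrak m=0$. Then for every limiting spinor $\phi$ the associated $\psi$ must make the right-hand side vanish, forcing $\nabla\psi\equiv 0$, $R_g|\psi|^2\equiv 0$ in $M$, and $H_g|\psi|^2\equiv 0$ on $\Sigma$. Since a nonvanishing parallel spinor in dimension three forces $g$ to be Ricci-flat, hence flat, while the boundary-condition analysis forces the shape operator of $\Sigma$ to vanish, we obtain that $g$ is flat and $\Sigma$ is totally geodesic, with $R_g\equiv 0$ and $H_g\equiv 0$. I would then conclude by a reflection (doubling) across the totally geodesic boundary: the doubled manifold is a smooth, boundaryless, asymptotically flat $3$-manifold with nonnegative scalar curvature and zero ADM mass, so the boundaryless rigidity of Corollary \ref{cor:pmt} identifies it with $(\mathbb R^3,\delta)$, giving $(M,g,\Sigma)=(\mathbb R^3_+,\delta^+,\partial\mathbb R^3_+)$. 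I expect the main obstacle to be the second step: pinning down a boundary condition that is at once elliptic and self-adjoint (guaranteeing solvability of the Dirac problem) and sign-compatible with $H_g$ (guaranteeing the boundary integrand is nonnegative), and checking that the corner terms along $S^1_r$ assemble correctly into the $S^1_\infty$-contribution to $\mathscr E$.
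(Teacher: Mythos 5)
Your proposal is essentially the paper's own argument: Theorem \ref{main:abl} is presented there as the time-symmetric case ($\Pi=0$) of Theorem \ref{main}, whose sketched proof is exactly your Witten-type scheme (a harmonic spinor asymptotic to a constant spinor of the model, the MIT-bag boundary condition along $\Sigma$, the integrated Lichnerowicz formula with $\mathcal R=R_g/4$ and $\mathcal H=H_g/2$, and rigidity via a reflection method that invokes the boundaryless PMT). The one remark worth adding is that the paper also records an alternative route used in \cite{almaraz2014positive}, namely a doubling argument reducing the statement directly to the standard time-symmetric PMT, which is what allows the spin assumption to be dropped in higher dimensions.
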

	
	The following rigidity result, which is the analogue of Geroch's conjecture in our setting, is worth mentioning.
	
	\begin{corollary}\label{rig:pmt}
		Under the conditions of Theorem \ref{main:abl}, if $g=\delta^+$ outside a compact set then $(M,g,\Sigma)=(\mathbb R^3_+,\delta^+,\partial \mathbb R^3_+)$ isometrically.
	\end{corollary}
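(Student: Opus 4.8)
The plan is to show that the extra hypothesis $g=\delta^+$ outside a compact set forces the energy $\mathscr E$ to vanish identically, after which the rigidity clause already contained in Theorem \ref{main:abl} finishes the argument. There is no need to re-examine the curvature conditions, since $R_g\geq 0$ and $H_g\geq 0$ are standing assumptions carried over from that theorem.

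First I would inspect the flux integral defining $\mathscr E$ in the first line of Table \ref{table:2}. Both the hemispherical contribution $\int_{S^2_{r,+}}\mathbb U_{\bf 1}(\nu)\,dS^2_{r,+}$ and the boundary correction $\int_{S^1_r}e^+(\partial_3,\vartheta)\,dS^1_r$ are built solely from the deviation tensor $e^+=g-\delta^+$ and its first-order coordinate derivatives: since the choice $w={\bf 1}$ annihilates the terms involving $dw$ and $\nabla_\delta w$, one has $\mathbb U_{\bf 1}={\rm div}_\delta e^+-d\,{\rm tr}_\delta e^+$, while the correction term is a pointwise component of $e^+$ itself. By hypothesis there is a compact set $K\subset M$ with $g=\delta^+$, hence $e^+\equiv 0$, throughout $M_{\rm ext}\setminus K$. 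Choosing $r$ so large that both $S^2_{r,+}$ and $S^1_r$ lie outside $K$, the integrands vanish identically along these surfaces, so each flux integral is exactly zero for every such $r$; letting $r\to+\infty$ then gives $\mathscr E=0$.

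With $\mathfrak m=\mathscr E=0$ in hand, and the hypotheses $R_g\geq 0$ and $H_g\geq 0$ still in force, the rigidity half of Theorem \ref{main:abl} applies verbatim and delivers the asserted isometry $(M,g,\Sigma)=(\mathbb R^3_+,\delta^+,\partial\mathbb R^3_+)$.

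I do not anticipate a genuine obstacle inside the corollary itself, since all of the analytic content has been front-loaded into the rigidity statement of Theorem \ref{main:abl}, which is the substantive input (and which, as in the spacetime case sketched above, ultimately reduces the equality case to the boundaryless classification via a reflection device). The one point that genuinely deserves care is the observation that $\mathscr E$ is a pure boundary flux depending on no more than the one-jet of $e^+$, so that \emph{exact} flatness near infinity makes it vanish on the nose rather than merely as the limit of a sequence of nonzero approximating integrals; this is transparent from the explicit expression for $\mathbb U_{\bf 1}$ together with the form of the additional boundary term recorded alongside Table \ref{table:2}.
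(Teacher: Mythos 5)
Your argument is correct and is precisely the (implicit) proof the paper intends: exact flatness near infinity kills both flux integrals in the definition of $\mathscr E$ since they depend only on the one-jet of $e^+$, so $\mathfrak m=0$, and the rigidity clause of Theorem \ref{main:abl} then gives the isometry. The paper states the corollary without proof, and your reasoning supplies exactly the missing two lines.
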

	
	We remark that the proof of Theorem \ref{main:abl} in \cite{almaraz2014positive} involves a doubling argument to reduce it to the standard (time-symmetric) PMT. Thus, in view of recent breakthroughs \cite{schoen2017positive,lohkamp2016higher} it actually holds true in any dimension (with no need for the spin assumption). 
	We remark that an alternative approach in low dimensions, based on the theory
	of free boundary minimal hypersurfaces, is presented in \cite{chai2018positive}.
	Also, it should be emphasized that the mean convexity condition $H_g\geq 0$ is derived as an immediate consequence of the boundary DEC, thus acquiring, likewise the interior DEC $R_g\geq 0$,  a justification on purely {\em physical} grounds: (\ref{dom:energ:bd}) means that the $3$-vector $S_{0a}$ is causal and future directed.

	\section{The case of a nonzero cosmological constant}\label{non:zero:lamb}
	
	If we now work in an arbitrary spatial dimension $n\geq 3$ and in the actions (\ref{action:eh}) and (\ref{gibb:haw}) we replace $R_{\overline g}$ by $R_{\overline g}-2\Lambda_{n,\varepsilon}$, where $\Lambda_{n,\epsilon}=\epsilon n(n-1)/2$, $\epsilon=\pm 1$, is a cosmological constant,  then 
	$G_{\overline g}$ gets replaced by $G_{\overline g}+\Lambda_{n,\epsilon}\overline g$ in the field equations (\ref{field:eq}) and (\ref{efe}). The model spacelike  metric is now given by 
	\begin{equation}\label{hads}
		\mathfrak g_{m,\epsilon}=\frac{ds^2}{1-\epsilon s^2-\frac{2m}{s^{n-2}}}+s^2\mathfrak h_0,
	\end{equation}
	where $m\geq 0$ is a real parameter, $\mathfrak h_0$ is the standard metric on $\mathbb S^{n-1}$ or $\mathbb S^{n-1}_+$ and $s$ varies in a certain interval $I_{m,\epsilon}\subset [0,+\infty]$.
	One easily verifies that (\ref{hads}) is the vacuum time-symmetric IDS associated to the Lorentzian metric 
	\[
	{{\mathfrak G}_{m,\epsilon}}=-\left(1-\epsilon s^2-\frac{2m}{s^{n-2}}\right)dx_0^2+\mathfrak g_{m,\epsilon},\quad x_0\in\mathbb R, 
	\] 
	which satisfies $G_{{\mathfrak G}_{m,\epsilon}}+\Lambda_{n,\epsilon}{{\mathfrak G}_{m,\epsilon}}=0$.
	We next consider rigidity/flexibility phenomena, in the spirit of Corollary \ref{rig:pmt}, where these models appear prominently.
	
	\begin{remark}\label{isot:coord}
		The case $\epsilon=0$ in (\ref{hads}) corresponds to the famous {\em Schwarzschild metric}, which models an uncharged, non-spinning black hole in GR with a (minimal) horizon located at $s=(2m)^{\frac{1}{n-2}}$. Another way of expressing this metric involves introducing a new radial parameter $r$ by 
		$$
		s={r}\left(1+\frac{m}{2 {r}^{n-2}}\right)^{\frac{2}{n-2}},
		$$
		so that a direct computation gives 
		\begin{equation}\label{isot:sch}
			\mathfrak{g}_{m,0}=\left(1+\frac{m}{2 {r}^{n-2}}\right)^{\frac{4}{n-2}}\delta.
		\end{equation}
		In these {\em isotropic} coordinates, the horizon is located at $r=(m/2)^{\frac{1}{n-2}}$. 
	\end{remark}
	
	\subsection{The asymptotically hyperbolic case ($\epsilon=-1$)}\label{neg:lamb}
	
	Due partially to potential applications to the so-called ADS/CFT correspondence, in recent years there has been a lot of interest in establishing positive mass theorems in case the asymptotic geometry at spatial infinity of the given spacetime is anti-deSitter space (or a quotient theoreof) \cite{biquard2005ads}; this corresponds to choosing $\epsilon=-1$ in (\ref{hads}). A notable result in this direction was put forward by Wang \cite{wang2001mass}, who worked in the spin category and treated the conformally compact case. In particular, the rigidity statement in his main theorem recovers a previous result  characterizing the standard hyperbolic space as the unique conformally compact, asymptotically hyperbolic spin manifold which is Einstein and whose conformal boundary at infinity is the canonical conformal structure on the sphere \cite{andersson1998scalar}. 
	Since our chief interest in this section is to explain how to extend this result in the presence of a noncompact boundary, we refrain from discussing the general theory of large scale invariants for asymptotically hyperbolic manifolds.  In any case, a good summary of recent results in this area may be found in \cite{huang2019mass}.

	Let $\overline N$ be a compact $n$-manifold, $n\geq 3$,  
	whose boundary $\partial \overline N$ decomposes as the union of two   smooth hypersurfaces 
	${\mathcal S}$ and $\Sigma$, with ${\mathcal S}$ being connected and such that  ${\mathcal S}\cap\Sigma$ is a $(n-2)$-dimensional ``corner''.
	If  $\mathfrak g$ is  a Riemannian metric on $N:={\rm int}\,\overline N\cup\Sigma$ then we say that  
	$(N,\mathfrak g)$ is {\em conformally compact} if there exists a collar neighborhood $\mathcal U\subset \overline N$ of $S$ such that on ${\rm int}\,\,\mathcal U$ we may write $\mathfrak g=\rho^{-2}{\widehat{\mathfrak g}}$ with $\widehat{\mathfrak g}$ extending to a sufficiently regular metric on $\mathcal U$ so that ${\mathcal S}$ and $\Sigma$ meet orthogonally (with respect to $\widehat{\mathfrak g}$) along their common boundary ${\mathcal S}\cap \Sigma$, where $\rho:\mathcal U\to \mathbb R$ is a {\em defining function} for ${\mathcal S}$ in the sense that  $\rho\geq 0$, $\rho^{-1}(0)={\mathcal S}$, $d\rho|_{\mathcal S}\neq 0$ and $\nabla_{\widehat{\mathfrak g}}\rho$ is tangent to $\Sigma$ along $\mathcal U\cap\Sigma$. 
	Clearly, the restriction $\widehat{\mathfrak g}|_{\mathcal S}$ determines a metric which changes by a conformal factor in case the defining function is changed.
	Thus, the conformal class $[\widehat{\mathfrak g}|_{\mathcal S}]$ of $\widehat{\mathfrak g}|_{\mathcal S}$ is well defined. We then say that the pair $(\mathcal S,[\widehat{\mathfrak g}|_{\mathcal S}])$ is the {\em conformal infinity} of $(N,\mathfrak g)$. 
	
	One then computes, as $\rho\to 0$, that the curvature tensor of ${\mathfrak g}$ is given by 
	$$
	R_{i j k \ell}=-|d \rho|_{\widehat{\mathfrak g}}^{2}\left(\mathfrak g_{i k} \mathfrak g_{j \ell}-\mathfrak g_{i \ell} \mathfrak g_{j k}\right)+{O}(\rho).
	$$
	Thus, 
	if $|d\rho|_{\widehat{\mathfrak g}}=1$ along ${\mathcal S}$ then 
	$(N,\mathfrak g)$ is {\em weakly} asymptotically hyperbolic in the sense that its 
	sectional curvature converges to $-1$ as one approaches $\mathcal S$. In this case, it is shown that  if $\mathfrak h_0$ is a metric on ${\mathcal S}$ representing  the given conformal infinity then there exists a unique defining function $\theta$ in $\mathcal U$ so that
	\begin{equation}\label{expan}
		\mathfrak g=\sinh^{-2}\theta\left(d\theta^2+\mathfrak h_{\theta}\right),
	\end{equation}
	where $\mathfrak h_{\theta}$ is a $\theta$-dependent family of metrics on ${\mathcal S}$ with $\mathfrak h_{\theta}|_{\theta=0}=\mathfrak h_0$.

	\begin{definition}\label{defasym2} 
		Let 
		$(N,\mathfrak g)$ be a weakly asymptotically hyperbolic manifold satisfying \eqref{expan}. We say that $(N,\mathfrak g)$ is {\em asymptotically hyperbolic} (in the conformally compact sense and with a non-compact boundary $\Sigma$) if its conformal infinity is $(\mathbb S_{+}^{n-1},[\mathfrak h_0])$, where $\mathfrak h_0$ is a round metric on $\mathbb S_{+}^{n-1}$, the unit upper $(n-1)$-hemisphere,
		and the following asymptotic expansion holds as $\theta\to 0$:
		\[
		\mathfrak h_{\theta}=\mathfrak h_0+\frac{\theta^n}{n!}\mathfrak h+\mathfrak k,
		\]
		where $\mathfrak h$ and $\mathfrak k$ are symmetric $2$-tensors on $\mathbb S^{n-1}_{+}$ and the remainder term $\mathfrak k$
		satisfies 
		\[
		|\mathfrak k|+|\nabla_{\mathfrak h_0} \mathfrak k|+|\nabla_{\mathfrak h_0}^2\mathfrak k|=o(\theta^{n+1}).
		\]
	\end{definition} 
	
	\begin{remark}\label{ads:model}
		It is instructive to briefly discuss the model space underlying Definition \ref{defasym2}. For this we fix a constant $m\geq 0$ and consider the metric in (\ref{hads}) with $\epsilon=-1$, 
		which is defined on $\mathbb H^{n}_{+,m}:=(s_{m,-1},+\infty)\times \mathbb S_+^{n-1}$, where $s_{0,-1}=0$ and for $m>0$, $s_{m,-1}>0$ is the unique positive zero of the denominator in (\ref{hads}). Clearly, $(\mathbb H^{n}_{+,0},\mathfrak g_{0,-1})$ is the {\em hyperbolic half-space}, which is obtained from the standard hyperbolic space by cutting along the totally geodesic hypersurface $(0,+\infty)\times \mathbb S^{n-2}$. For $m>0$, $(\mathbb H^{n}_{+,m},\mathfrak g_{m,-1})$ is the {\em anti-de Sitter-Schwarzschild (adSS) half-space}. These latter metrics satisfy $R_{\mathfrak g_{m,-1}}=-n(n-1)$ and  are asymptotically hyperbolic in the sense that $\mathfrak g_{m,-1}=\mathfrak g_{0,-1}+O(s^{-n})$ as $s\to +\infty$.
		In order to check that each $\mathfrak g_{m,-1}$ is conformally compact we rewrite it as 
		\[
		\mathfrak g_{m,-1}=\sinh^{-2}\theta(d\theta^2+u(\theta)^2\mathfrak h_0),
		\]	
		so that $s$ and $u$ satisfy the ODEs
		\[
		\dot s(\theta)  =  -\sinh^{-1}\theta\,\sqrt{1+s^2-\frac{2m}{s^{n-2}}}
		\]
		and 
		\[
		\cosh \theta\, u -\sinh \theta\,\dot u  = \sqrt{\sinh^2 \theta+u^2-2m\frac{\sinh^{n} \theta}{u^{n-2}}},
		\]
		with $s(0)=+\infty$ and $u(0) = 1$. 
		A short computation then shows that, as $\theta\to 0$, $u$ expands as 
		\[
		u(\theta)=1+c_n{m}\theta^n+o(\theta^{n+1}),\quad c_n>0,
		\]	
		as desired.
	\end{remark}

	The parameter $m$ may be viewed as the total mass of the gravitational system modeled by the IDS $(\mathbb H^{n}_{+,m},\mathfrak g_{m,-1})$. 
	More generally, a notion of mass may be assigned to any manifold as in Definition \ref{defasym2}, which, as in the asymptotically flat case, captures the rate of convergence of $\mathfrak g$ towards the reference metric $\mathfrak g_{0,-1}$, with the corresponding PMT being established in the spin category \cite{almaraz2020mass}\footnote{See also \cite{almaraz2021spacetime} for a non-time-symmetric version of this result.}. As a consequence of its rigidity statement we obtain the next theorem, which extends to our setting a previous result due to Andersson-Dahl \cite{andersson1998scalar}.

	\begin{theorem}\label{rigconfcom}\cite{almaraz2020mass}
		Let $(M,g,\Sigma)$ be a conformally compact, asymptotically hyperbolic spin $n$-manifold as above. Assume further that $g$ is Einstein 
		and that the mean curvature of $\Sigma$ is everywhere nonnegative. Then $(M,g,\Sigma)=(\mathbb H^n_{+,0},\mathfrak g_{0,-1},\partial \mathbb H^n_{+,0})$ isometrically. 
	\end{theorem}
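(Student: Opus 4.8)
The plan is to deduce the statement from the rigidity part of the positive mass theorem for asymptotically hyperbolic spin manifolds with a noncompact boundary established in \cite{almaraz2020mass}. Everything reduces to showing that the mass $\mathfrak m$ of $(M,g,\Sigma)$, defined via the asymptotic expansion in Definition \ref{defasym2}, vanishes; once $\mathfrak m=0$ is known, the rigidity statement of that theorem identifies $(M,g,\Sigma)$ with the model.

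First I would normalize the Einstein constant. Since $g$ is Einstein, $\mathrm{Ric}_g=\lambda g$ for a constant $\lambda$, and since $(M,g)$ is weakly asymptotically hyperbolic (its sectional curvature tends to $-1$ along $\mathcal S$, as recorded in the curvature expansion preceding Definition \ref{defasym2}) one is forced to take $\lambda=-(n-1)$, whence $R_g\equiv -n(n-1)$. This is exactly the interior normalization under which the mass and the PMT of \cite{almaraz2020mass} are formulated. Together with the hypothesis that the mean curvature of $\Sigma$ is everywhere nonnegative, and with the orthogonality of $\mathcal S$ and $\Sigma$ along the corner $\mathcal S\cap\Sigma$ that is built into the conformally compact structure, all hypotheses of the (time-symmetric, spin) PMT are in force. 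In particular its mass inequality yields $\mathfrak m\ge 0$.

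The crux is the reverse inequality $\mathfrak m\le 0$, and this is where the full strength of the Einstein equation (beyond the mere scalar bound $R_g\ge -n(n-1)$) together with the completeness of $M$ must be used. It is essential that this step be global rather than a purely asymptotic computation on $\mathcal S$: the anti–de Sitter–Schwarzschild half-spaces $(\mathbb H^n_{+,m},\mathfrak g_{m,-1})$ of Remark \ref{ads:model} are Einstein with the very same conformal infinity yet have strictly positive mass, so they must be excluded by a mechanism that sees their incompleteness at the horizon $s=s_{m,-1}$, i.e. the presence of an extra inner boundary absent from our $M$. My proposed route is an integral identity of Pohozaev/Reilly type attached to the formal adjoint $DR_g^{*}$ of the linearized scalar curvature operator. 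Pairing $DR_g^{*}$ applied to a model static potential $V>0$ (asymptotic to the $\cosh$-type potential of $\mathfrak g_{0,-1}$) against $e=g-\mathfrak g_{0,-1}$ and integrating over an exhaustion of $M$, the interior integrand is proportional to $R_g+n(n-1)$ and vanishes identically by the normalization above; the contribution along $\Sigma$ carries a sign dictated by $H_\Sigma\ge 0$ (using that $\Sigma$ is noncompact but the flux is integrable at infinity); and the contribution issuing from $\mathcal S$ converges to a positive multiple of $\mathfrak m$. Because $M$ has no inner boundary, no compensating horizon term appears, and reading off signs gives $\mathfrak m\le 0$. Combined with $\mathfrak m\ge 0$ this forces $\mathfrak m=0$.

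The main obstacle is precisely this identity. One must select $V$ and the flux form so that the bulk integrand is genuinely proportional to $R_g+n(n-1)$, verify that the boundary term along the noncompact $\Sigma$ has the correct sign under $H_\Sigma\ge 0$ while remaining convergent, handle the regularity at the corner $\mathcal S\cap\Sigma$ by means of the free-boundary (orthogonality) condition, and confirm that the $\mathcal S$-term reproduces the mass of Definition \ref{defasym2}; an acceptable alternative would be to specialize the spinorial Witten-type formula underlying \cite{almaraz2020mass} to the Einstein case. Once $\mathfrak m=0$ is secured, the rigidity clause of the PMT applies and yields the isometry $(M,g,\Sigma)=(\mathbb H^n_{+,0},\mathfrak g_{0,-1},\partial\mathbb H^n_{+,0})$, recovering in our boundary setting the Andersson–Dahl characterization \cite{andersson1998scalar} and completing the proof.
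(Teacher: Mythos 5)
Your overall reduction --- show that the mass of $(M,g,\Sigma)$ vanishes and then invoke the rigidity clause of the asymptotically hyperbolic PMT with noncompact boundary from \cite{almaraz2020mass} --- is exactly the route the paper takes (the survey states the theorem is ``a consequence of its rigidity statement''), and your normalization $R_g\equiv -n(n-1)$ and the derivation of $\mathfrak m\ge 0$ are fine. The problem is the half of the argument you identify as the crux, namely $\mathfrak m\le 0$.

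First, the premise on which you build that step is false: the anti--de Sitter--Schwarzschild slices $(\mathbb H^n_{+,m},\mathfrak g_{m,-1})$ with $m>0$ are \emph{not} Einstein --- they have constant scalar curvature $-n(n-1)$ but non-proportional Ricci tensor (the paper says this explicitly in Remark \ref{pen:hyp}, and it is already visible for the $\epsilon=0$ Schwarzschild slice in dimension $3$, which is scalar-flat but not flat). So these examples do not force the argument to be global, and in fact the actual mechanism is precisely the ``purely asymptotic computation on $\mathcal S$'' that you dismiss: for a conformally compact \emph{Einstein} metric whose conformal infinity is the round (hemi)sphere, the Fefferman--Graham-type expansion shows that the coefficient $\mathfrak h$ in Definition \ref{defasym2} is trace-free (and divergence-free) with respect to $\mathfrak h_0$, so the mass aspect integrands vanish identically and the energy--momentum functional is zero; rigidity then applies. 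This is the Andersson--Dahl/Wang argument that \cite{almaraz2020mass} adapts to the half-space setting. Second, the substitute you propose --- pairing $DR_g^{*}V$ against $e=g-\mathfrak g_{0,-1}$ over an exhaustion --- does not deliver the claimed sign: the bulk term in such an identity equals $V\,(R_g+n(n-1))$ only to first order in $e$, and the uncontrolled quadratic remainder (together with the unexamined corner and $\Sigma$-flux terms) blocks the conclusion $\mathfrak m\le 0$; you acknowledge this yourself by calling the identity the ``main obstacle.'' As written, then, the decisive step of the proof is missing, and the idea that would complete it (the Einstein equations killing $\operatorname{tr}_{\mathfrak h_0}\mathfrak h$ at conformal infinity) is the one you explicitly ruled out.
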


	In words: the only way to fill in the conformal class  $(\mathbb S^{n-1}_+,[\mathfrak h_0])$ by an Einstein metric in the spin category and assuming further that the inner boundary is mean convex is the obvious
	one given by the hyperbolic half-space.
	This rigidity result may be of some interest in connection with recent developments involving the construction of a holographic dual to a conformal field theory defined on a manifold with boundary, the so-called ADS/BCFT correspondence\footnote{The ``B'' in BCFT means that the conformal boundary  itself carries a nontrivial boundary.}  \cite{takayanagi2011holographic,fujita2011aspects,nozaki2012central}.
	In this context, 
	the problem of determining the structure of the moduli space of conformally compact, Einstein  manifolds with a given conformal infinity and having a minimal inner boundary  plays a key role. A further development in this circle of ideas led to the formulation of the so-called {\em Ryu-Takayanagi conjecture} in Quantum Gravity \cite{ryu2006aspects}, which turns out to be a  huge generalization of the celebrated Beckenstein-Hawking formula for the black hole entropy and  proposes to compute the {\em entanglement entropy} determined by a proper region $\mathcal S\subset W$ in a given conformal infinity $(W,[\gamma])$, where $W$ is closed and $[\gamma]$ is the conformal class of a metric $\gamma$ on $W$. In the static case the conjecture roughly says that this kind of entropy is proportional to the area of the minimal inner boundary $\Sigma_{\mathcal S}$ which, together with $\mathcal S$, bounds a conformally compact Einstein region $(M,g)$ which has $(\mathcal S,[\gamma|_{\mathcal S}])$ as its conformal infinity. Theorem \ref{rigconfcom} suggests that, at least for conformal infinities close to to the conformal class  $(\mathbb S^{n-1}_+,[\mathfrak h_0])$ and for bulk regions parametrized by the half $n$-disk, the corresponding moduli space should be constituted by a unique configuration, thus eliminating any ambiguity in the choice of $\Sigma_{\mathcal S}$. This would extend to this setting a recent breakthrough in the boundaryless case \cite{chang2020compactness}.
	
	\begin{remark}\label{pen:hyp}
		The metric $\mathfrak g_{m,-1}$, $m>0$, is {\em not} a model for Theorem \ref{rigconfcom} for at least two reasons: it is not Einstein and, more importantly, it carries an inner minimal horizon (black hole) given by $s=s_{m,-1}$. In fact, this family of metrics models the equality case of a (conjectured) Penrose inequality in the asymptotically hyperbolic setting which is still wide open to the best of our knowledge. For instance, in the boundaryless case this inequality has only been proved in two situations: graphs in any dimension \cite{de2016alexandrov} and small perturbations of the adSS metric for $n=3$ \cite{ambrozio2014perturbations}. It would be interesting to investigate if the techniques in these works could be adapted in the presence of a boundary. 
	\end{remark}

	\begin{remark}\label{another:rig}
		It also follows from the main result in \cite{almaraz2020mass} that the conclusion of Theorem \ref{rigconfcom} still holds if we replace the Einstein assumption by two other natural requirements: $R_{g}\geq -n(n-1)$ everywhere and $g=\mathfrak g_{0,-1}$ outside a compact set. 
	\end{remark}
	
	\subsection{The case of positive cosmological constant ($\epsilon=1$)}\label{pos:cosm}
	
	Here we consider the model metric in (\ref{hads}) with $\epsilon=1$. This metric, named after de Sitter-Schwarzschild (dSS), satisfies $R_{\mathfrak g_{m,1}}=n(n-1)$ and for each $m$ such that
	\begin{equation}\label{mass:posv}
		0<m<m_*:=\frac{(n-2)^{(n-2) / 2}}{n^{n / 2}},
	\end{equation}  
	it is defined for $s$ varying in a certain bounded interval $(s_{m,1},s^\bullet_{m,1})\subset (0,+\infty)$. As $m\to 0$, $s_{m,1}\to 0$ and $s^\bullet_{m,1}\to 1$, so we recover the round hemisphere $(\mathbb S_+^{n}, \mathfrak h_0)$ in the limit. Even though the lack of an asymptotic region prevents us from defining a notion of mass in this setting, we may still ask whether  the dSS space is rigid with respect to a metric perturbation $g$ which coincides with $\mathfrak g_{m,1}$ up to first order along the totally geodesic boundary and satisfies the energy condition $R_g\geq n(n-1)$. Surprisingly enough, the next result confirms that this is never the case.

	\begin{theorem}\label{minoo:m}\cite{cruz2018deforming}
		For each $n \geq 3$ and for each value of the mass parameter as in (\ref{mass:posv}), the dSS space $M_{m,1}:=(s_{m,1},s^\bullet_{m,1})\times \mathbb S^{n-1}$, carries  metrics $\widetilde{g}_1$ and $\widetilde{g}_2$ with the following properties:
		\begin{itemize}
			\item${R}_{\widetilde {g}_1}>n(n-1)$ everywhere;
			\item $\widetilde {g}_1=\mathfrak g_{m,1}$ along $\partial M_{m,1}$;
			\item $\partial M_{m,1}$ is totally geodesic with respect to $\widetilde {g}_1$,
		\end{itemize}
		and 
		\begin{itemize}
			\item${R}_{\widetilde {g}_2}\geq n(n-1)$, with the strict inequality holding somewhere;
			\item $\widetilde {g}_2=\mathfrak g_{m,1}$ in a whole neighborhood of $\partial M_{m,1}$.
		\end{itemize}
	\end{theorem}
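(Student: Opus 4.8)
The plan is to adapt the scalar‑curvature–increasing deformations of Brendle–Marques–Neves \cite{brendle2011deformations} to the warped background $g_0:=\mathfrak g_{m,1}$, written in arclength gauge as $g_0=dt^2+u(t)^2\mathfrak h_0$ with $(u')^2=V(u)$, $V(s)=1-s^2-2ms^{-(n-2)}$. The two horizons, located where $V=0$, are precisely the loci $u'=0$, hence are totally geodesic in $g_0$; together they constitute $\partial M_{m,1}$. The organizing object is the \emph{static potential} $N:=u'=\sqrt{V(u)}$, which obeys the overdetermined equation
\begin{equation}
DR^{*}_{g_0}(N)=\nabla^2 N-(\Delta_{g_0}N)\,g_0-N\,\mathrm{Ric}_{g_0}=0,
\end{equation}
expressing that $g_0$ is the spatial factor of a static solution with $\Lambda_{n,1}$; decisively, $N>0$ in $\mathrm{int}\,M_{m,1}$ while $N\equiv0$ on $\partial M_{m,1}$. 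A first sanity check explains why the construction must break symmetry: for any \emph{rotationally symmetric} competitor the Hawking‑type mass $2\mathfrak m:=u^{n-2}\!\left(1-(u')^2-u^2\right)$ satisfies $\mathfrak m'=\tfrac{u'u^{n-1}}{2(n-1)}\bigl(R-n(n-1)\bigr)$, so $R\ge n(n-1)$ together with $u'\ge0$ forces $\mathfrak m$ to be nondecreasing; since $\mathfrak m$ equals the same value $m$ at both horizons, one gets $\mathfrak m\equiv m$ and hence $R\equiv n(n-1)$, i.e. rigidity. Any deformation achieving Theorem \ref{minoo:m} must therefore be genuinely non‑rotationally‑symmetric.

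Next I would isolate the \emph{linearized obstruction}. For a symmetric $2$‑tensor $h$ the Green identity reads
\begin{equation}
\int_{M_{m,1}} N\,DR_{g_0}(h)\,dV=\int_{M_{m,1}}\langle DR^{*}_{g_0}(N),h\rangle\,dV+\oint_{\partial M_{m,1}}\mathcal B(N,h)\,dA,
\end{equation}
and since $DR^{*}_{g_0}(N)=0$ while $N=0$ on $\partial M_{m,1}$, the boundary integrand $\mathcal B(N,h)$ is killed once $h$ meets the matching conditions (compact interior support, for $\widetilde g_2$; vanishing induced variation together with preserved vanishing of the second fundamental form, for $\widetilde g_1$). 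Thus $\int N\,DR_{g_0}(h)=0$ for every admissible $h$: one cannot raise $R$ above $n(n-1)$ to first order. The remedy is to pass to second order by choosing $h$ \emph{in the kernel} $DR_{g_0}(h)=0$, so that along $g_t=g_0+th$ one has $R(g_t)=n(n-1)+\tfrac{t^2}{2}D^2R_{g_0}(h,h)+O(t^3)$, and the whole construction hinges on the weighted quadratic form
\begin{equation}
Q(h):=\int_{M_{m,1}} N\,D^2R_{g_0}(h,h)\,dV.
\end{equation}

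Granting an admissible $h$ with $Q(h)>0$, the passage from a positive average to a pointwise inequality follows the Brendle–Marques–Neves correction. Fix a bump $\rho\ge0$, supported where $h$ is and bounded below there, normalized so that $\int N\rho\,dV=\tfrac12 Q(h)$ (and, should $g_0$ carry static potentials beyond $\mathbb R\,N$, matching the remaining finitely many moments—a set of linear constraints easily met by shaping $\rho$). One then solves $DR_{g_0}(k)=\rho-\tfrac12 D^2R_{g_0}(h,h)$ for a symmetric $2$‑tensor $k$, which is possible by standard elliptic theory precisely because the right‑hand side is $L^2$‑orthogonal to the static potential(s), and sets $g_t=g_0+th+t^2k$. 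By construction $R(g_t)=n(n-1)+t^2\rho+O(t^3)$, so for small $t>0$ one obtains $R(g_t)\ge n(n-1)$, with strict inequality throughout the support of $\rho$. Taking $h,k$ compactly supported in the interior yields $\widetilde g_2$, equal to $\mathfrak g_{m,1}$ near $\partial M_{m,1}$; allowing $h,k$ to persist up to the boundary subject only to the first‑order matching (using $N=0$ there to absorb the boundary terms) and arranging $\rho>0$ up to $\partial M_{m,1}$ yields $\widetilde g_1$, equal to $\mathfrak g_{m,1}$ along $\partial M_{m,1}$ with totally geodesic boundary and $R>n(n-1)$ everywhere.

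The step I expect to be the crux is producing the symmetry‑breaking $h\in\ker DR_{g_0}$ with $Q(h)>0$. Because $g_0$ is only warped‑product—and, unlike the round hemisphere, \emph{not} Einstein—the second variation $D^2R_{g_0}(h,h)$ must be computed head‑on on this background, and by the mass monotonicity above no rotationally invariant $h$ can work; I would therefore test with perturbations built from the lowest nonconstant spherical harmonics on the $\mathbb S^{n-1}$ factor, corrected to satisfy $DR_{g_0}(h)=0$, which is exactly where the hypothesis $n\ge3$ buys the spectral room that fails in low dimension. Evaluating $Q(h)$ then collapses to a one‑dimensional integral in $t$ with weight $N=u'$ against the explicit dSS profile, and the remaining labor is to certify its positivity throughout the admissible range $0<m<m_*$. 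A subordinate technical point—controlling the boundary term $\mathcal B(N,h)$ at \emph{both} horizons simultaneously for the $\widetilde g_1$ matching—is rendered harmless by the vanishing of $N$ on $\partial M_{m,1}$.
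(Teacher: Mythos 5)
The survey states Theorem \ref{minoo:m} without proof, citing \cite{cruz2018deforming}, so the comparison below is against the argument in that source (which follows the template of \cite{brendle2011deformations}). Your overall strategy is the right one, and several of your structural observations are correct: the arclength form $g_0=dt^2+u^2\mathfrak h_0$ with static potential $N=u'=\sqrt{V(u)}$ vanishing exactly on the two horizons, the first\-/order obstruction $\int N\,DR_{g_0}(h)=0$ coming from $DR^*_{g_0}(N)=0$, the resulting need to work at second order, and the Hawking\-/mass monotonicity showing that rotationally symmetric competitors are rigid. But the proof has a hole exactly where you yourself flag ``the crux'': you never exhibit an admissible $h$ with $DR_{g_0}(h)=0$ and $Q(h)=\int N\,D^2R_{g_0}(h,h)\,dV>0$, nor verify positivity of the resulting one\-/dimensional integral over the whole range $0<m<m_*$. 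Since the dSS background is not Einstein and the symmetric ansatz is excluded by your own monotonicity argument, this computation is the entire substance of the theorem; everything else in your write\-/up is standard scaffolding. As it stands this is an outline, not a proof.

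The route you propose for $\widetilde g_2$ is also genuinely flawed. First, the expansion $R(g_t)=n(n-1)+t^2\rho+O(t^3)$ yields $R\ge n(n-1)$ only where $\rho$ is bounded below by a positive constant; on $\bigl(\mathrm{supp}\,h\cup\mathrm{supp}\,k\bigr)\setminus\{\rho\ge c\}$ the cubic remainder can push $R$ below $n(n-1)$, and no continuous compactly supported $\rho\ge0$ is bounded below on its own support, so the pointwise inequality does not close. Second, solving $DR_{g_0}(k)=\rho-\tfrac12 D^2R_{g_0}(h,h)$ with $k$ \emph{compactly supported} is a Corvino\-/type problem in weighted spaces, not ``standard elliptic theory''; $L^2$\-/orthogonality to $N$ does not by itself produce compactly supported solutions. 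The actual argument obtains $\widetilde g_2$ from $\widetilde g_1$ by the gluing theorem of Brendle--Marques--Neves: one bends $\widetilde g_1$ near $\partial M_{m,1}$ so that its mean curvature strictly exceeds that of $\mathfrak g_{m,1}$, interpolates between the two metrics in a collar, and repairs the resulting $\epsilon$\-/loss of scalar curvature by a conformal change; this is precisely why the conclusion for $\widetilde g_2$ is only ``strict somewhere''. A smaller but real inaccuracy: the boundary term in your Green identity is not killed by $N=0$ alone, since the surface gravity $\partial_\nu N$ is nonzero at both horizons for $0<m<m_*$; the term reduces to $\oint(\partial_\nu N)\,\mathrm{tr}_{\partial M_{m,1}}h\,dA$, so you must impose $h|_{T\partial M_{m,1}}=0$, not merely ``first\-/order matching''.
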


	As $m\to 0$ we recover the celebrated (negative) solution of Min-Oo's conjecture \cite{brendle2011deformations}.  
	Hence, flexibility seems to be a characteristic feature in the
	positive cosmological constant regime. Taken together, these results imply that there are no analogues of the rigidity statements of the positive mass and  
	Penrose inequality in this case; this should be contrasted to Corollary \ref{rig:pmt} and Remark \ref{another:rig}, where rigidity prevails thanks to the appropriate PMT.
	
	\begin{remark}\label{rem:final}
		As $m \rightarrow m_{*}$, the dSS space converges to the cylinder $\left[0, s_{*} \pi\right] \times \mathbb S^{n-1}({s^{*}})$ endowed with the metric $g_{m_{*},1}=d r^{2}+s_{*}^{2} \mathfrak h_0$, where 
		$s_*^2=(n-2)/n$. Unfortunately, the argument leading to Theorem \ref{minoo:m} above completely breaks down in this limit. It is an interesting question to examine the rigidity/flexibility of this IDS. Another nice question appears after consideration of the {\em half dSS space}  $M_{m,1,+}:=(s_{m,1},s^\bullet_{m,1})\times \mathbb S_+^{n-1}$, which carries the totally geodesic inner boundary $M_{m,1,+}:=(s_{m,1},s^\bullet_{m,1})\times \mathbb S^{n-2}$. Again, it would be interesting to investigate the flexibility of this space with an eye towards extending Theorem \ref{minoo:m} to this setting. 
	\end{remark}
	
	\section{Mass, center of mass and isoperimetry in the presence of a boundary}\label{center:sec}

	Here we bring the center of mass $\mathscr C$ defined in Section \ref{cons:qt:bd} to the forefront of our discussion on rigidity phenomena and explain how it plays a central role in determining the large scale isoperimetric profile of an asymptotically flat $3$-manifold $(M,g,\Sigma)$ of positive mass. The corresponding story in the boundaryless case is summarized in \cite{huang2009center,eichmair2013unique}. 
	
	For the sake of motivation, we start by fixing an interior point $q\in M$. For each $\mathsf r>0$ small enough we may consider the isoperimetric quotient 
	\[
	I_{\mathsf r}^M(q)=\frac{A_{\mathsf r}(q)^{3/2}}{V_{\mathsf r}(q)},
	\]
	where $A_{\mathsf r}(q)$, respectively $V_{\mathsf r}(q)$, is the area of the geodesic sphere, respectively the volume of the geodesic ball, of radius ${\mathsf r}$ centered at $q$. If 
	$I^{\mathbb R^3}=6\pi^{1/2}$ is the corresponding quotient in $\mathbb R^3$, which obviously does not depend on $(q,\mathsf r)$,  then a classical computation\footnote{Curiously enough, this isoperimetric interpretation of the scalar curvature is due to the same H. Vermeil mentioned in the beginning of Section \ref{class} and already appears in Pauli's famous encyclopedia article on GR \cite{pauli2relativitatstheorie}.} gives, as $\mathsf r\to 0$,
	\[
	1-\frac{I_{\mathsf r}^M(q)}{I^{\mathbb R^3}}=\frac{R_g(q)}{20}\mathsf r^2+O(\mathsf r^4).
	\] 
	Notice that the left-hand side may be computed if the metric $g$ is only assumed to be $C^0$. Thus, the validity of the local subisoperimetry property
	\begin{equation}\label{subiso:in}
		I_{\mathsf r}^M(q)\leq I^{\mathbb R^3},
	\end{equation}
	may be interpreted as saying that $R_g(q)\geq 0$ in a weak sense. Interestingly enough, a similar interpretation holds for the mean convexity condition $H_g(q)\geq 0$ at a boundary point. Indeed, after introducing Fermi coordinates around $q\in\Sigma$ we may consider the isoperimetric quotient
	\[
	I_{\mathsf r}^{M,\Sigma}(q)=\frac{A^+_{\mathsf r}(q)^{3/2}}{V^+_{\mathsf r}(q)},
	\]		
	where $A^+_{\mathsf r}(q)$ is the area of the coordinate hemisphere of radius ${\mathsf r}$ centered at $q$ and $V^+_{\mathsf r}(q)$ is the volume of the region enclosed by this hemisphere and $\Sigma$. This time we have 
	\[
	1-\frac{I_{\mathsf r}^{M,\Sigma}(q)}{I^{\mathbb R^3_+,\mathbb R^2}}=\frac{3}{16}{H_g(q)}\mathsf r+O(\mathsf r^2), 
	\]
	where $I^{\mathbb R^3_+,\mathbb R^2}=3\cdot 2^{1/2}\cdot\pi^{1/2}$ is  the corresponding quotient evaluated at hemispheres centered at $\mathbb R^2=\partial\mathbb R^3_+$.
	Thus, we may think of the local boundary subisoperimetry property
	\begin{equation}\label{subiso:in:bd}
		I_{\mathsf r}^{M,\Sigma}(q)\leq I^{\mathbb R^3_+,\mathbb R^2}
	\end{equation} 
	as expressing mean convexity of $\Sigma$ at $q$ in a weak sense. We may now envisage a far-reaching generalization of Theorem \ref{main:abl}.
	
	\begin{question}\label{far:reach:abl}
		If $(M,g,\Sigma)$ is a $C^0$ asymptotically flat $3$-manifold satisfying (\ref{subiso:in}) in the interior and (\ref{subiso:in:bd}) along the boundary, is it true that its ``isoperimetric mass'' is nonnegative and vanishes only if $(M,g,\Sigma)=(\mathbb R^3_+,\delta^+,\mathbb R^2)$ isometrically? 
	\end{question}
	
	As it is always the case with a (perhaps too) optimistic proposal, the proper definition of the concepts involved is part of the problem. However, a little examination of the smooth case reveals a natural candidate for the isoperimetric mass. 
	
	\begin{definition}\label{iso:mass:sm}
		For a (smooth) asymptotically flat $3$-manifold $(M,g,\Sigma)$ as in Definition \ref{asym:flat:Pi}, let 
		$A^+(r)$ (respectively $V^+(r)$) be the area of $S^2_{r,+}$ (respectively, the volume of the compact region enclosed by $S^2_{r,+}$ and  $\Sigma$) and set
		\[
		\mathcal I_r^{M;\Sigma}
		=\frac{V^+(r)}{A^+(r)}\left(1-{\frac{{I}^{M;\Sigma}_r}{I^{\mathbb R^3_+;\mathbb R^2}}}\right),
		\]
		where 
		\[
		{I}^{M;\Sigma}_r=\frac{A^+(r)^{\frac{3}{2}}}{V^+(r)},
		\]
		and $I^{\mathbb R^3_+;\mathbb R^2}=3\cdot 2^{1/2}\pi^{1/2}$ is the corresponding isoperimetric quotient computed at a hemisphere centered at a point in $\mathbb R^2=\partial \mathbb R^3_+$. 
		With this notation, the  {\em isoperimetric mass} of $(M,g,\Sigma)$ is defined  by 
		\[
		\mathfrak m^{\rm iso}=\lim_{r\to +\infty}\mathcal I^{M;\Sigma}_r.
		\]
	\end{definition} 
	
	That we follow a promising route is manifested by the next result, which extends to our setting a famous remark by Huisken \cite{huisken2006isoperimetric}.
	
	\begin{theorem}\label{iso=mass}\cite{almaraz2020center}
		One has $\mathfrak m^{\rm iso}=\mathfrak m$. 
	\end{theorem}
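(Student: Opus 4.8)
The plan is to adapt Huisken's isoperimetric characterization of the mass to the half-space model, carrying out the relevant expansions and keeping careful track of the extra contribution coming from the flat boundary. Throughout I work in the asymptotic chart of Definition \ref{asym:flat:Pi}, write $e^+=g-\delta^+=O_3(r^{-\tau})$, and abbreviate by $B_r^+=B_r(0)\cap\mathbb R^3_+$ the coordinate half-ball bounded by $S^2_{r,+}$ and the disk $\Sigma\cap B_r$. First I would expand the two geometric quantities entering $\mathcal I^{M;\Sigma}_r$. Since the induced metric on the coordinate hemisphere differs from the flat one by the restriction of $e^+$ to the tangent space, and the $g$-volume element satisfies $\sqrt{\det g}=1+\tfrac12{\rm tr}_\delta e^++O(r^{-2\tau})$, one gets
\[
A^+(r)=2\pi r^2+\tfrac12\int_{S^2_{r,+}}\big({\rm tr}_\delta e^+-e^+(\nu,\nu)\big)\,dA_\delta+O(r^{2-2\tau}),
\]
\[
\frac{d}{dr}V^+(r)=\int_{S^2_{r,+}}\sqrt{\det g}\,dA_\delta=2\pi r^2+\tfrac12\int_{S^2_{r,+}}{\rm tr}_\delta e^+\,dA_\delta+O(r^{2-2\tau}),
\]
where $\nu=\partial_r$ is the Euclidean outward unit normal. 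The $O(r^{2-2\tau})$ remainders are quadratic in $e^+$ and, after the normalization below, contribute only $o(1)$ since $\tau>1/2$. Integrating the second relation gives $V^+(r)=\tfrac23\pi r^3+v_1(r)+o(r^2)$ with $v_1(r)=\tfrac12\int_{B_r^+}{\rm tr}_\delta e^+\,dx$.

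Next I would perform the algebraic reduction of the isoperimetric deficit. Writing $A^+=2\pi r^2(1+\alpha)$ and $V^+=\tfrac23\pi r^3(1+\beta)$ for the relative corrections $\alpha,\beta=o(1)$, a short expansion of $I^{M;\Sigma}_r=(A^+)^{3/2}/V^+$ against the model value $I^{\mathbb R^3_+;\mathbb R^2}$ yields $1-I^{M;\Sigma}_r/I^{\mathbb R^3_+;\mathbb R^2}=\beta-\tfrac32\alpha+(\text{h.o.t.})$, while $V^+/A^+=\tfrac r3(1+o(1))$. Denoting by $a_1(r)=\tfrac12\int_{S^2_{r,+}}\big({\rm tr}_\delta e^+-e^+(\nu,\nu)\big)\,dA_\delta$ the area correction, this gives
\[
\mathfrak m^{\rm iso}=\lim_{r\to+\infty}\frac r3\Big(\beta-\tfrac32\alpha\Big)=\lim_{r\to+\infty}\Big(\frac{v_1(r)}{2\pi r^2}-\frac{a_1(r)}{4\pi r}\Big).
\]
This is the combination that must be shown to converge to $\mathfrak m=\mathscr E$.

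The crux is to identify this limit with the flux defining $\mathscr E$ in the first row of Table \ref{table:2}. Observing that ${\rm div}_\delta({\rm div}_\delta e^+-d\,{\rm tr}_\delta e^+)$ is exactly the linearization of the scalar curvature at $\delta^+$, and that $R_g=O(r^{-3-\sigma})$ is integrable on $M$, I would apply the divergence theorem on $B_r^+$ to trade the volume integral $v_1$ for the ADM-type hemisphere flux $\int_{S^2_{r,+}}\mathbb U_{\mathbf 1}(\nu)\,dA_\delta$ together with a flux $\int_{\Sigma\cap B_r}\mathbb U_{\mathbf 1}(-\partial_3)\,dA_\delta$ on the flat disk, the interior remainder being controlled by $\int_M R_g\,dM<\infty$. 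This is precisely the step where the superficially divergent $O(r^{1-\tau})$ parts of $v_1/(2\pi r^2)$ and $a_1/(4\pi r)$ cancel, leaving the finite contribution $\tfrac{1}{16\pi}\int_{S^2_{\infty,+}}\mathbb U_{\mathbf 1}(\nu)\,dS^2_{\infty,+}$. Finally, integrating by parts once more on the two-dimensional disk $\Sigma\cap B_r$ and using the decay $H_g=O(r^{-2-\sigma})$ (so that the mean-curvature term is integrable over $\Sigma$) collapses the disk integral onto the circle at infinity, producing the correction $-\tfrac{1}{16\pi}\int_{S^1_\infty}e^+(\partial_3,\vartheta)\,dS^1_\infty$. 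Summing the two pieces reproduces verbatim the expression for $\mathscr E$, whence $\mathfrak m^{\rm iso}=\mathfrak m$.

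The hard part will be the bookkeeping in this last stage. Individually both $v_1/(2\pi r^2)$ and $a_1/(4\pi r)$ diverge like $r^{1-\tau}$ when $\tau<1$, so everything hinges on showing that these leading terms cancel \emph{exactly} in the prescribed combination and that the surviving finite part is neither a spurious multiple of the mass nor carries an additive constant. The genuinely new difficulty relative to the boundaryless case is that every integration by parts now generates two boundary pieces, one on the hemisphere $S^2_{r,+}$ and one on the flat disk $\Sigma\cap B_r$, and one must verify that the disk contributions assemble correctly, via the mean-curvature decay, into the single circle integral of Table \ref{table:2}, with no leftover corner terms along $S^1_r=\partial S^2_{r,+}\cap\Sigma$.
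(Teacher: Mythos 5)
Your reduction of the theorem to the statement
\[
\mathfrak m^{\rm iso}=\lim_{r\to+\infty}\Bigl(\frac{v_1(r)}{2\pi r^2}-\frac{a_1(r)}{4\pi r}\Bigr),
\qquad a_1=\tfrac12\int_{S^2_{r,+}}\bigl({\rm tr}_\delta e^+-e^+(\nu,\nu)\bigr),\quad v_1=\tfrac12\int_{B_r^+}{\rm tr}_\delta e^+,
\]
is correct: the expansions of $A^+$ and $V^+$, the estimate on the quadratic remainders (harmless after multiplication by $r/3$ because $\tau>1/2$), and the algebra of the deficit all check out, and this is indeed the combination one must match with $\mathscr E$. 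You have also correctly located the two genuine difficulties (exact cancellation of the $O(r^{1-\tau})$ parts, and the assembling of the flat-disk contributions into the circle integral of Table \ref{table:2} via the decay of $H_g$). Note that the paper, being a survey, gives no proof of this statement; the route you describe is the one followed in the cited reference, which adapts Huisken's observation and the Fan--Shi--Tam computation to the half-space.

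However, the pivotal identification step is justified by an argument that does not work as written. Applying the divergence theorem to the field $({\rm div}_\delta e^+-d\,{\rm tr}_\delta e^+)^\sharp$ over $B_r^+$ converts the \emph{interior integral of the linearized scalar curvature} (second derivatives of $e^+$) into the hemisphere flux $\int_{S^2_{r,+}}\mathbb U_{\mathbf 1}(\nu)$ plus a disk flux; this is exactly how one proves that $\mathscr E$ is well defined, but it says nothing about $v_1$, which is the integral of $e^+$ \emph{undifferentiated}. No application of the divergence theorem ``trades'' $v_1$ for the ADM-type flux. The missing engine is the pointwise decomposition, on coordinate (hemi)spheres, of $\mathbb U_{\mathbf 1}(\nu)$ into a radial derivative of $e^+(\nu,\nu)-{\rm tr}_\delta e^+$, a $\tfrac1r$-multiple of the integrands of $a_1$ and $v_1'$, and a tangential divergence (which over a hemisphere leaves an equatorial line integral rather than vanishing). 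Integrating that identity in $r$, and using convergence of the flux (integrability of $R_g$, and of $H_g$ along $\Sigma$ for the disk piece), is what yields the precise asymptotics of $a_1(r)$ and $v_1(r)$, the exact cancellation of their divergent parts in the combination above, and the surviving finite term $\tfrac{1}{16\pi}\bigl(\int_{S^2_{\infty,+}}\mathbb U_{\mathbf 1}(\nu)-\int_{S^1_\infty}e^+(\partial_3,\vartheta)\bigr)$. As it stands, your proposal asserts the conclusion of this computation without supplying the identity that produces it, so the central step of the proof is still open.
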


	\begin{corollary}\label{iso=mass:col}
		If $(M,g,,\Sigma)$ satisfies $R_g\geq 0$ in the interior and $H_g\geq 0$ along the boundary then $\mathfrak m^{\rm iso}\geq 0$, with the strict inequality holding unless $(M,g,\Sigma)=(\mathbb R^3_+,\delta^+,\mathbb R^2)$ isometrically. 
	\end{corollary}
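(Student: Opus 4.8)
The plan is to read the corollary as an immediate consequence of the two results established immediately above it: the identification $\mathfrak m^{\rm iso}=\mathfrak m$ furnished by Theorem \ref{iso=mass}, and the positive mass theorem with boundary recorded in Theorem \ref{main:abl}. The curvature hypotheses of the corollary, namely $R_g\geq 0$ in the interior and $H_g\geq 0$ along $\Sigma$, are precisely those of Theorem \ref{main:abl}, so I would expect no geometric input beyond these two theorems to be required; the entire substance of the statement has already been packaged into them.

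First I would invoke Theorem \ref{iso=mass} to replace the isoperimetrically defined quantity $\mathfrak m^{\rm iso}$ by the geometric mass $\mathfrak m=\mathscr E$ attached to the asymptotic expansion of $g$ in the sense of Definition \ref{asym:flat:Pi}. This identity holds with no sign assumption on the curvatures, so it reduces every assertion about $\mathfrak m^{\rm iso}$ to the corresponding assertion about $\mathfrak m$, absorbing the isoperimetric content of the statement once and for all.

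Next I would bring in the sign hypotheses and apply Theorem \ref{main:abl}. Under $R_g\geq 0$ and $H_g\geq 0$ it gives $\mathfrak m\geq 0$, whence $\mathfrak m^{\rm iso}=\mathfrak m\geq 0$. For the rigidity part I would argue by contraposition: if $(M,g,\Sigma)$ is \emph{not} isometric to $(\mathbb R^3_+,\delta^+,\mathbb R^2)$, then the equality case of Theorem \ref{main:abl} forbids $\mathfrak m=0$, forcing $\mathfrak m>0$ and hence $\mathfrak m^{\rm iso}>0$. Together the two directions yield $\mathfrak m^{\rm iso}>0$ strictly except in the flat model, exactly as claimed, provided one checks that the rigidity conclusion of Theorem \ref{main:abl} matches that of the corollary verbatim so that the chain $\mathfrak m^{\rm iso}=0\Leftrightarrow \mathfrak m=0\Leftrightarrow (M,g,\Sigma)=(\mathbb R^3_+,\delta^+,\mathbb R^2)$ closes without loss.

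Given these inputs the corollary is immediate, so the main obstacle does not lie in the argument above. Were one instead to prove the statement \emph{ab initio}, the genuine difficulty would be concentrated in Theorem \ref{iso=mass} itself, which asserts that the large-scale limit of the normalized isoperimetric defect reproduces the mass. That identity rests on a careful expansion of $A^+(r)$ and $V^+(r)$ as $r\to+\infty$ and on recognizing the coefficient of the leading correction as the boundary-corrected flux integral defining $\mathscr E$, including the extra term over $S^1_\infty$ that encodes the presence of $\Sigma$. Assuming that theorem, as the corollary does, the positivity and rigidity follow by the two-step reduction above.
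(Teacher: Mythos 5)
Your argument is exactly the paper's proof: the published proof consists of the single line ``Apply Theorem \ref{main:abl},'' which implicitly uses Theorem \ref{iso=mass} to identify $\mathfrak m^{\rm iso}$ with $\mathfrak m$ just as you spell out. Your version is correct and merely makes the two-step reduction explicit.
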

	
	\begin{proof}
		Apply Theorem \ref{main:abl}.
	\end{proof}
	
	This corollary may be viewed as a manifestation  of the validity of Question \ref{far:reach:abl}
	in the smooth world, thus supplying some evidence for is validity in general. 
	
	After this somewhat lengthy preamble, which emphasized the relationship between the (sign of the) mass and the large scale isoperimetric properties of an IDS with a noncompact boundary, we now investigate what the center of mass has to say in this respect. For this it is convenient to restrict ourselves to a special class of IDSs, whose behavior at infinity is modeled on the Schwarzschild metric (\ref{isot:sch}) with $n=3$. 
	
	\begin{definition}\label{half:shc:def} 
		An asymptotically flat $3$-manifold with a noncompact boundary $(M,g,\Sigma)$
		is {\em asymptotically half-Schwarzschild}
		(ahS) if a neighborhood of infinity is diffeomorphic to the complement of a hemisphere in $\mathbb R^3_+$ so that 
		\[
		g=\left(1+\frac{2m}{r}\right)\delta^++p^+,\quad p^+= O(r^{-2}) 
		\]
		holds in this asymptotic region.
	\end{definition}
	
	As usual, if we further take it for granted that the corresponding Regge-Teitelboim conditions are met, then  the limit defining $\mathscr C$ in Table 2 converges  \cite{de2019mass}. Now, Theorem \ref{iso=mass} suggests that for an ahS manifold with $\mathfrak m>0$, large coordinate hemispheres may be perturbed to yield global solutions of the corresponding {\em relative} isoperimetric problem, where each competing surface $S$ satisfies $\partial S\subset\Sigma$ and ${\rm int}\,S\cap \Sigma=\emptyset$, with the 
	constrained  volume being the one  enclosed by $S$ and $\Sigma$. 
	The next result, which identifies $\mathscr C$ to the center of a geometric foliation at infinity, turns out to be a first step towards this goal. 
	
	\begin{theorem}\label{free:af:bd}\cite{almaraz2020center}
		Assume that $(M,g)$ is an ahS   $3$-manifold with a noncompact boundary $\Sigma$. If ${\mathfrak m}=m/2 >0$ then there exists a neighborhood of infinity which is foliated by   {strictly stable} free boundary contant mean curvature hemispheres.
		Moreover, the geometric center  of this  foliation coincides with the center of mass $\mathscr C$ of $(M,g,\Sigma)$.
	\end{theorem}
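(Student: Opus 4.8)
\section*{Proof proposal}

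The plan is to adapt the Huisken--Yau perturbation scheme \cite{huisken1996definition} to the free boundary category, taking as approximate solutions the coordinate hemispheres $S^2_{r,+}(a)$ centered at boundary points $a\in\mathbb R^2=\partial\mathbb R^3_+$ (a Euclidean hemisphere centered \emph{on} the plane automatically meets it orthogonally) and correcting them to exact free boundary constant mean curvature hemispheres. First I would write a candidate surface as a normal graph $u$ over $S^2_{r,+}(a)$ and study the map sending $(u,a,r)$ to the pair consisting of $H_g-\overline H$ (mean curvature minus its average) and the deviation of the graph from meeting $\Sigma$ orthogonally along $\partial S$. A zero of this system with vanishing angular defect is precisely a free boundary CMC hemisphere. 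Since $(M,g,\Sigma)$ is asymptotically half-Schwarzschild with $\mathfrak m=m/2>0$, a direct expansion shows that on $S^2_{r,+}(a)$ one has $H_g=2/r-4m/r^2+\ldots$ up to position-dependent corrections of lower order, and that $\Sigma$ is asymptotically totally geodesic, so the hemispheres are orthogonal to $\Sigma$ to leading order.

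The analytic heart of the argument is the linearized operator, namely the Jacobi operator
$$
L_r=\Delta_S+|A|^2+\mathrm{Ric}_g(\nu,\nu),
$$
acting on graph functions, supplemented by the Robin boundary condition
$$
\frac{\partial u}{\partial\eta}-\mathrm{II}_\Sigma(\nu,\nu)\,u=0\quad\text{along}\quad\partial S,
$$
which is the linearization of the free boundary condition ($\eta$ the outward conormal of $\partial S$ in $S$, $\mathrm{II}_\Sigma$ the second fundamental form of $\Sigma$). On the round Euclidean hemisphere this operator carries a two-dimensional approximate kernel spanned by the restrictions of $x_1,x_2$; these correspond exactly to translations of the center along $\Sigma$, which is the geometric reason the center of mass in Table \ref{table:2} retains only its boundary-tangential components (off-plane translations would destroy orthogonality). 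I would show that, after quotienting by this two-dimensional space of admissible translations, $L_r$ together with the Robin condition is invertible with uniformly bounded inverse once one passes to the scale-invariant normalization $r^{-2}g$, the crucial point being that the $m>0$ term shifts the relevant eigenvalues away from zero in the right direction. Because $\mathrm{II}_\Sigma=O(r^{-\tau-1})$, the Robin condition degenerates to a Neumann condition at leading order, which keeps the corner analysis along $S^1_r$ under control.

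With uniform invertibility in hand, an implicit function theorem argument produces, for each sufficiently large $r$, a free boundary CMC hemisphere $\Sigma_r$ (unique near $S^2_{r,+}(a)$) together with a center $a(r)\in\mathbb R^2$. Strict monotonicity of the mean curvature in $r$ then forces the surfaces $\{\Sigma_r\}$ to be pairwise disjoint and to sweep out a neighborhood of infinity, yielding the foliation; strict stability follows because the second variation of area under the volume constraint is governed by $L_r$ restricted to the orthogonal complement of the translations, which $m>0$ renders positive definite. Finally, I would define the geometric center as $\lim_{r\to\infty}a(r)$ and identify it with $\mathscr C$ by inserting the expansion of $\Sigma_r$ into the defining flux integrals: the balancing condition forcing $a(r)$ to be an approximate critical point of the enclosed-volume functional reproduces exactly the surface integral $\int_{S^2_{\infty,+}}\mathbb U_{x_a}(\nu)\,dS^2_{\infty,+}-\int_{S^1_\infty}x_a e^+(\partial_e,\vartheta)\,dS^1_\infty$ from the center-of-mass row, including the extra $S^1_\infty$ boundary term that reflects the presence of $\Sigma$.

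The main obstacle I anticipate is the uniform (in $r$) control of the Jacobi operator under the coupled CMC / free boundary condition: isolating the two-dimensional approximate kernel of boundary-tangential translations, proving that the positive mass pushes the remaining spectrum uniformly away from zero, and simultaneously preventing the corner behavior along $S^1_r$ from spoiling the elliptic estimates. The identification of the geometric center with $\mathscr C$, which requires matching the balancing integrals against the boundary-corrected flux formula of Table \ref{table:2}, is the second delicate point.
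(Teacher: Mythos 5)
Your proposal follows essentially the same route as the paper, which states that the proof in \cite{almaraz2020center} uses the implicit function method pioneered by Ye and refined by Huang: one perturbs coordinate hemispheres centered on $\partial\mathbb R^3_+$ to exact free boundary CMC surfaces via the linearized problem (Jacobi operator plus Robin/Neumann condition), isolates the two-dimensional approximate kernel of boundary-tangential translations, and uses the $m>0$ balancing condition both for strict stability and to identify the geometric center with $\mathscr C$ through the boundary-corrected flux integral. The only small correction is one of attribution: what you describe is Ye's implicit-function scheme rather than the Huisken--Yau one (they used volume-preserving mean curvature flow), but the substance of your argument is exactly the method the paper cites.
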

	
	The proof of this result makes use of the so-called implicit function method pioneered by Ye \cite{ye1997foliation} and refined by Huang \cite{huang2008center}. Our next contribution completely solves the relative isoperimetric problem referred to above by extending a celebrated result due to Eichmair-Metzger \cite{eichmairlarge} to our setting.

	\begin{theorem}\label{iso:ext:em1}\cite{almaraz2020center}
		Let $(M,g,\Sigma)$ be as in Theorem \ref{free:af:bd}. Then for all sufficiently large volume there exists an associated bounded, relative isoperimetric region whose connected and smooth  boundary remains close to a centered coordinate hemisphere, with the region sweeping out the whole manifold as the volume diverges towards infinity. In particular, the corresponding  isoperimetric surfaces coincide with the leaves of the foliation in Theorem \ref{free:af:bd}, thus being unique for each value of the enclosed volume.  
	\end{theorem}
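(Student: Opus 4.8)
The plan is to adapt the strategy of Eichmair and Metzger \cite{eichmairlarge} to the relative (free boundary) isoperimetric problem, splitting the argument into an existence component and an identification component. There are two natural routes. The first is a \emph{direct} approach, working intrinsically on $(M,g,\Sigma)$; the second is a \emph{doubling} approach, reflecting the manifold across $\Sigma$ to produce an asymptotically Schwarzschild manifold $(\widehat M,\widehat g)$ equipped with an isometric involution $\iota$, so that $\iota$-symmetric isoperimetric regions in $\widehat M$ correspond exactly to relative isoperimetric regions in $M$. The doubling route is attractive because it would let us quote the boundaryless theorem directly, but it requires care: along $\Sigma$ the ahS condition of Definition \ref{half:shc:def} controls the metric only to the stated order, so $\widehat g$ need not be better than Lipschitz across the reflection hypersurface and one must verify both that the known existence/regularity theory still applies and that minimizers inherit the reflection symmetry. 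I would therefore run the direct approach as the primary line, using doubling only as a heuristic guide.

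For existence, fix a large volume $V$ and take a minimizing sequence for the relative perimeter among Caccioppoli sets whose interior boundary portion encloses volume $V$ and whose free boundary sits on $\Sigma$. Lower semicontinuity and $BV_{\rm loc}$-compactness produce a limit, and the only obstruction to existence is the escape of a definite fraction of the volume to spatial infinity. This is exactly where the hypothesis $\mathfrak m=m/2>0$ enters. Using the centered coordinate hemispheres as comparison surfaces together with Theorem \ref{iso=mass} — which identifies $\mathfrak m$ with the isoperimetric mass $\mathfrak m^{\rm iso}$ and hence encodes the first-order deviation of the ahS isoperimetric profile from that of the Euclidean half-space — I would establish a strict subadditivity (equivalently, strict concavity beyond the model) of the relative isoperimetric profile. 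Positivity of $\mathfrak m$ makes a single large centered hemisphere strictly more efficient than any configuration that splits volume off to infinity, ruling out the lossy scenario and yielding a genuine minimizer $\Omega_V$ for every sufficiently large $V$.

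Standard geometric measure theory in dimension three then gives that $\partial\Omega_V\cap\,{\rm int}\,M$ is a smooth embedded surface of constant mean curvature meeting $\Sigma$ orthogonally along a free boundary, with no interior or boundary singularities. The next, and in my view hardest, step is a set of a priori \emph{position and scale} estimates: for large $V$ one must show that $\partial\Omega_V$ is connected, has intrinsic diameter comparable to its distance $\approx r(V)\to\infty$ from the center, and is $C^{2}$-close to a centered coordinate hemisphere. Off-center and dumbbell-type competitors must be excluded; following Eichmair-Metzger this combines the effective monotonicity coming from $\mathfrak m>0$ (a Hawking-mass / Willmore-type comparison, now for surfaces with free boundary on $\Sigma$) with density and curvature estimates for stable CMC surfaces. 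The free boundary condition introduces genuinely new boundary integral terms in these monotonicity and stability computations, and controlling them uniformly as $V\to\infty$ is the main obstacle.

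Finally, with $\partial\Omega_V$ pinned near a centered coordinate hemisphere, I would invoke the foliation of Theorem \ref{free:af:bd}: its leaves are strictly stable free boundary CMC hemispheres, and the strict stability furnishes a \emph{uniqueness} statement — any free boundary CMC hemisphere lying in the asymptotic region and sufficiently $C^{2}$-close to a centered coordinate hemisphere must coincide with a leaf. Matching enclosed volumes identifies $\partial\Omega_V$ with the unique leaf of the prescribed volume, which simultaneously shows that the isoperimetric region sweeps out $M$ as $V\to\infty$ and that the isoperimetric surface is unique for each large volume, as claimed.
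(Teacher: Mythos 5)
The paper offers no proof of this theorem: it is a survey statement quoted from \cite{almaraz2020center} and described only as an extension of the Eichmair--Metzger result \cite{eichmairlarge} to the free boundary setting, and your outline --- existence of relative isoperimetric regions by ruling out volume escaping to infinity via $\mathfrak m>0$ and Theorem \ref{iso=mass}, position and scale estimates pinning the minimizer near a centered coordinate hemisphere, and identification with the strictly stable foliation of Theorem \ref{free:af:bd} --- is precisely that strategy. Your proposal is consistent with the approach the paper indicates, and you correctly flag the free boundary terms in the monotonicity and stability estimates as the genuinely new technical burden.
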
 
	
	This result goes a long way towards determining an asymptotic expansion for the relative isoperimetric profile of ahS $3$-manifolds with positive mass for all sufficiently large values of the enclosed volume.

			\bibliographystyle{plain}
			\bibliography{conserved-delima-arxiv-v1}

\begin{thebibliography}{10}

\bibitem{almaraz2010existence}
S{\'e}rgio Almaraz.
\newblock An existence theorem of conformal scalar-flat metrics on manifolds
  with boundary.
\newblock {\em Pacific journal of mathematics}, 248(1):1--22, 2010.

\bibitem{almaraz2015convergence}
S{\'e}rgio Almaraz.
\newblock Convergence of scalar-flat metrics on manifolds with boundary under a
  {Y}amabe-type flow.
\newblock {\em Journal of Differential Equations}, 259(7):2626--2694, 2015.

\bibitem{almaraz2014positive}
S{\'e}rgio Almaraz, Ezequiel Barbosa, and Levi~L. de~Lima.
\newblock A positive mass theorem for asymptotically flat manifolds with a
  non-compact boundary.
\newblock {\em Communications in Analysis and Geometry}, 24(4):673--715, 2016.

\bibitem{almaraz2020center}
Sergio Almaraz and Levi~L. de~Lima.
\newblock Mass, center of mass and isoperimetry in asymptotically flat
  $3$-manifolds.
\newblock {\em arXiv:2007.10920}, 2020.

\bibitem{almaraz2020mass}
S{\'e}rgio Almaraz and Levi~L. de~Lima.
\newblock The mass of an asymptotically hyperbolic manifold with a non-compact
  boundary.
\newblock {\em Annales Henri Poincar{\'e}}, 21(11):3727--3756, 2020.

\bibitem{almaraz2021spacetime}
S{\'e}rgio Almaraz, Levi~L. de~Lima, and Luciano Mari.
\newblock Spacetime positive mass theorems for initial data sets with
  non-compact boundary.
\newblock {\em International Mathematics Research Notices}, 2021(4):2783--2841,
  2021.

\bibitem{almaraz2019compactness}
S{\'e}rgio Almaraz, Olivaine~S de~Queiroz, and Shaodong Wang.
\newblock A compactness theorem for scalar-flat metrics on 3-manifolds with
  boundary.
\newblock {\em Journal of Functional Analysis}, 277(7):2092--2116, 2019.

\bibitem{ambrozio2014perturbations}
Lucas~C. Ambrozio.
\newblock On perturbations of the anti-de {S}itter-{S}chwarzschild spaces of
  positive mass.
\newblock {\em Communications in Mathematical Physics}, 337:767--783, 2015.

\bibitem{andersson1998scalar}
Lars Andersson and Mattias Dahl.
\newblock Scalar curvature rigidity for asymptotically locally hyperbolic
  manifolds.
\newblock {\em Annals of Global Analysis and Geometry}, 16(1):1--27, 1998.

\bibitem{arnowitt1962gravitation}
Richard Arnowitt, Stanley Deser, and Charles~W. Misner.
\newblock The dynamics of general relativity.
\newblock {\em Gravitation: an introduction to current research}, 1962.

\bibitem{barbashov1983continuous}
B.M. Barbashov and V.V. Nesterenko.
\newblock Continuous symmetries in field theory.
\newblock {\em Fortschritte der Physik}, 31(10):535--567, 1983.

\bibitem{beig1978arnowitt}
Robert Beig.
\newblock Arnowitt-deser-misner energy and $g00$.
\newblock {\em Physics Letters A}, 69(3):153--155, 1978.

\bibitem{biquard2005ads}
Olivier Biquard.
\newblock {\em AdS/CFT correspondence: Einstein metrics and their conformal
  boundaries: 73rd Meeting of Theoretical Physicists and Mathematicians,
  Strasbourg, September 11-13, 2003}, volume~8.
\newblock European Mathematical Society, 2005.

\bibitem{brendle2015recent}
Simon Brendle and Fernando Marques.
\newblock Recent progress on the {Y}amabe problem.
\newblock {\em Adv. Lect. Math.}, 20:29--47, 2011.

\bibitem{brendle2011deformations}
Simon Brendle, Fernando Marques, and Andr{\'e} Neves.
\newblock Deformations of the hemisphere that increase scalar curvature.
\newblock {\em Inventiones mathematicae}, 185(1):175--197, 2011.

\bibitem{chai2018positive}
Xiaoxiang Chai.
\newblock Positive mass theorem and free boundary minimal surfaces.
\newblock {\em arXiv:1811.06254}, 2018.

\bibitem{chang2020compactness}
Sun-Yung~A. Chang, Yuxin Ge, and Jie Qing.
\newblock Compactness of conformally compact {E}instein $4$-manifolds ii.
\newblock {\em Advances in Mathematics}, 373:107325, 2020.

\bibitem{choquet2015beginnings}
Yvonne Choquet-Bruhat.
\newblock Beginnings of the {C}auchy problem for {E}instein’s field
  equations.
\newblock {\em Surveys in Differential Geometry}, 20(1):1--16, 2015.

\bibitem{christodoulou2008mathematical}
Demetrios Christodoulou.
\newblock {\em Mathematical problems of general relativity I}, volume~1.
\newblock European Mathematical Society, 2008.

\bibitem{cruz2018deforming}
C{\'\i}cero~T. Cruz, Levi~L. de~Lima, and Jos{\'e}~Fabio Montenegro.
\newblock Deforming the scalar curvature of the de {S}itter-{S}chwarzschild
  space.
\newblock {\em The Journal of Geometric Analysis}, 28(1):473--491, 2018.

\bibitem{de2016alexandrov}
Levi~L. De~Lima and Frederico Gir{\~a}o.
\newblock An {A}lexandrov-{F}enchel-type inequality in hyperbolic space with an
  application to a {P}enrose inequality.
\newblock {\em Annales Henri Poincar{\'e}}, 17(4):979--1002, 2016.

\bibitem{de2019mass}
Levi~L de~Lima, Frederico Gir{\~a}o, and Amilcar Montalb{\'a}n.
\newblock The mass in terms of {E}instein and {N}ewton.
\newblock {\em Classical and Quantum Gravity}, 36(7):075017, 2019.

\bibitem{eichmairlarge}
Michael Eichmair and Jan Metzger.
\newblock Large isoperimetric surfaces in initial data sets.
\newblock {\em Journal of Differential Geometry}, 94(1):159--186, 2013.

\bibitem{eichmair2013unique}
Michael Eichmair and Jan Metzger.
\newblock Unique isoperimetric foliations of asymptotically flat manifolds in
  all dimensions.
\newblock {\em Inventiones mathematicae}, 194(3):591--630, 2013.

\bibitem{escobar1992conformal}
Jos{\'e}~F Escobar.
\newblock Conformal deformation of a riemannian metric to a scalar flat metric
  with constant mean curvature on the boundary.
\newblock {\em Annals of Mathematics}, 136(1):1--50, 1992.

\bibitem{fujita2011aspects}
Mitsutoshi Fujita, Tadashi Takayanagi, and Erik Tonni.
\newblock Aspects of ads/bcft.
\newblock {\em Journal of High Energy Physics}, 2011(11):43, 2011.

\bibitem{gibbons1977action}
Gary~W Gibbons and Stephen~W Hawking.
\newblock Action integrals and partition functions in quantum gravity.
\newblock {\em Physical Review D}, 15(10):2752, 1977.

\bibitem{gromov1980classification}
Mikhael Gromov and H.~Blaine Lawson.
\newblock The classification of simply connected manifolds of positive scalar
  curvature.
\newblock {\em Annals of Mathematics}, pages 423--434, 1980.

\bibitem{gromov1983positive}
Mikhael Gromov and H.~Blaine Lawson.
\newblock Positive scalar curvature and the {D}irac operator on complete
  {R}iemannian manifolds.
\newblock {\em Publications Math{\'e}matiques de l'IH{\'E}S}, 58:83--196, 1983.

\bibitem{gromov2019four}
Mikhail Gromov.
\newblock Four lectures on scalar curvature.
\newblock {\em arXiv:1908.10612}, 2019.

\bibitem{harlow2020covariant}
Daniel Harlow and Jie-qiang Wu.
\newblock Covariant phase space with boundaries.
\newblock {\em Journal of High Energy Physics}, 2020(10):1--52, 2020.

\bibitem{huang2008center}
Lan-Hsuan Huang.
\newblock On the center of mass of isolated systems with general asymptotics.
\newblock {\em Classical and Quantum Gravity}, 26(1):015012, 2008.

\bibitem{huang2009center}
Lan-Hsuan Huang.
\newblock {\em Center of mass and constant mean curvature foliations for
  isolated systems}.
\newblock MSRI Lecture Notes, 2009.

\bibitem{huang2019mass}
Lan-Hsuan Huang, Hyun~C. Jang, and Daniel Martin.
\newblock Mass rigidity for hyperbolic manifolds.
\newblock {\em Communications in Mathematical Physics}, pages 1--21, 2019.

\bibitem{huang2019equality}
Lan-Hsuan Huang and Dan~A. Lee.
\newblock Equality in the spacetime positive mass theorem.
\newblock {\em Communications in Mathematical Physics}, pages 1--29, 2019.

\bibitem{huisken2006isoperimetric}
Gerhard Huisken.
\newblock An isoperimetric concept for mass and quasilocal mass.
\newblock {\em Oberwolfach Rep}, 3(1):87--88, 2006.

\bibitem{huisken1996definition}
Gerhard Huisken and Shing-Tung Yau.
\newblock Definition of center of mass for isolated physical systems and unique
  foliations by stable spheres with constant mean curvature.
\newblock {\em Inventiones mathematicae}, 124(1-3):281--311, 1996.

\bibitem{koerber2019riemannian}
Thomas Koerber.
\newblock The riemannian {P}enrose inequality for asymptotically flat manifolds
  with non-compact boundary.
\newblock {\em arXiv:1909.13283}, 2019.

\bibitem{kosmann2012noether}
Yvette Kosmann-Schwarzbach.
\newblock The noether theorems. invariance and conservation laws in the
  twentieth century. translated by bertram e. schwarzbach. dordrecht,
  heidelberg, london, new york (springer). 2011. isbn 978-0-387-87868-6.
\newblock 2012.

\bibitem{leeparker1987}
John~M Lee and T~Parker.
\newblock The {Y}amabe problem.
\newblock {\em Bulletin of AMS}, 17(1):37--91, 1987.

\bibitem{lohkamp2016higher}
Joachim Lohkamp.
\newblock The higher dimensional positive mass theorem ii.
\newblock {\em arXiv:1612.07505}, 2016.

\bibitem{mayer2017barycenter}
Martin Mayer and Cheikh~B. Ndiaye.
\newblock Barycenter technique and the {R}iemann mapping problem of
  {C}herrier-{E}scobar.
\newblock {\em Journal of Differential Geometry}, 107(3):519--560, 2017.

\bibitem{nozaki2012central}
Masahiro Nozaki, Tadashi Takayanagi, and Tomonori Ugajin.
\newblock Central charges for {BCFT}s and holography.
\newblock {\em Journal of High Energy Physics}, 2012(6):66, 2012.

\bibitem{olver2000applications}
Peter~J Olver.
\newblock {\em Applications of Lie groups to differential equations}, volume
  107.
\newblock Springer Science \& Business Media, 2000.

\bibitem{pauli2relativitatstheorie}
Wolfgang Pauli.
\newblock {\em Relativit{\"a}tstheorie.}
\newblock {E}ncyklop{\"a}die {M}ath. {W}iss. Bd. V-2,. B.G. Teubner Verlag,
  1920.

\bibitem{ringstrom2015origins}
Hans Ringstr{\"o}m.
\newblock Origins and development of the {C}auchy problem in general
  relativity.
\newblock {\em Classical and Quantum Gravity}, 32(12):124003, 2015.

\bibitem{ryu2006aspects}
Shinsei Ryu and Tadashi Takayanagi.
\newblock Aspects of holographic entanglement entropy.
\newblock {\em Journal of High Energy Physics}, 2006(08):045, 2006.

\bibitem{schoen1984conformal}
Richard Schoen.
\newblock Conformal deformation of a riemannian metric to constant scalar
  curvature.
\newblock {\em Journal of Differential Geometry}, 20(2):479--495, 1984.

\bibitem{schoen1979structure}
Richard Schoen and Shing-Tung Yau.
\newblock On the structure of manifolds with positive scalar curvature.
\newblock {\em Manuscripta mathematica}, 28(1):159--183, 1979.

\bibitem{schoen1981energy}
Richard Schoen and Shing-Tung Yau.
\newblock The energy and the linear momentum of space-times in general
  relativity.
\newblock {\em Communications in Mathematical Physics}, 79(1):47--51, 1981.

\bibitem{schoen2017positive}
Richard Schoen and Shing-Tung Yau.
\newblock Positive scalar curvature and minimal hypersurface singularities.
\newblock {\em arXiv:1704.05490}, 2017.

\bibitem{stolz1992simply}
Stephan Stolz.
\newblock Simply connected manifolds of positive scalar curvature.
\newblock {\em Annals of Mathematics}, pages 511--540, 1992.

\bibitem{takayanagi2011holographic}
Tadashi Takayanagi.
\newblock Holographic dual of a boundary conformal field theory.
\newblock {\em Physical review letters}, 107(10):101602, 2011.

\bibitem{wang2016energy}
Mu-Tao Wang.
\newblock Energy, momentum, and center of mass in general relativity.
\newblock {\em arXiv:1605.04968}, 2016.

\bibitem{wang2001mass}
Xiaodong Wang.
\newblock The mass of asymptotically hyperbolic manifolds.
\newblock {\em Journal of Differential Geometry}, 57(2):273--299, 2001.

\bibitem{witten1981new}
Edward Witten.
\newblock A new proof of the positive energy theorem.
\newblock {\em Communications in Mathematical Physics}, 80(3):381--402, 1981.

\bibitem{ye1997foliation}
Rugang Ye.
\newblock Foliation by constant mean curvature spheres on asymptotically flat
  manifolds.
\newblock {\em Geometric Analysis and the Calculus of Variations}, 1996.

\bibitem{york1972role}
James~W York~Jr.
\newblock Role of conformal three-geometry in the dynamics of gravitation.
\newblock {\em Physical Review Letters}, 28(16):1082, 1972.

\end{thebibliography}

		\end{document}